\documentclass[11pt]{article}
\usepackage{mathbbold}
\usepackage{amsfonts}
\usepackage{mathrsfs}
\usepackage{bbm}
\usepackage{latexsym,amsfonts,amssymb,amsmath,amsthm}
\usepackage{graphicx}

\usepackage{setspace}

\newtheorem{theorem}{Theorem}[section]
\newtheorem{definition}[theorem]{Definition}
\newtheorem{lemma}[theorem]{Lemma}

\newtheorem{proposition}[theorem]{Proposition}
\parindent 0.5cm
\evensidemargin 0cm \oddsidemargin 0cm \topmargin 0cm \textheight
22cm \textwidth 16cm \footskip 2cm \headsep 0cm

\setlength{\baselineskip}{16pt} \setcounter{section}{0}

\numberwithin{equation}{section}
\parindent 0.5cm
\evensidemargin 0cm \oddsidemargin 0cm \topmargin 0cm \textheight
22cm \textwidth 16cm \footskip 2cm \headsep 0cm

\begin{document}

\baselineskip 20pt

\begin{center}

\textbf{\Large Sufficient Criteria for Existence of Pullback
Attractors for Stochastic Lattice Dynamical Systems with
Deterministic Non-autonomous Terms} \footnote{This work has been
partially supported by NSFC Grants 11071199, NSF of Guangxi Grants
2013GXNSFBA019008 and Guangxi Provincial Department of Research
Project Grants 2013YB102.}

\vskip 0.5cm

{\large  Anhui Gu, Yangrong Li}

\vskip 0.3cm

\textit{School of Mathematics and Statistics, Southwest
University, Chongqing 400715, China}\\

\vskip 1cm

\begin{minipage}[c]{15cm}

\noindent \textbf{Abstract}: We consider the pullback attractors for
non-autonomous dynamical systems generated by stochastic lattice
differential equations with non-autonomous deterministic terms. We
first establish a sufficient condition for existence of pullback
attractors of lattice dynamical systems with both non-autonomous
deterministic and random forcing terms. As an application of the
abstract theory, we prove the existence of a unique pullback
attractor for the first-order lattice dynamical systems with both
deterministic non-autonomous forcing terms and multiplicative white
noise. Our results recover many existing ones on the existences of
pullback attractors for lattice dynamical systems with autonomous
terms or white noises.\vspace{5pt}

\vspace{5pt}

\textit{Keywords}:  Random attractor, stochastic lattice dynamical
system, multiplicative white noise.

\end{minipage}
\end{center}

\vspace{10pt}

\baselineskip 18pt

\section{Introduction}
\label{1} The study of non-autonomous evolution equations has
attracted several interests from both mathematicians and physicists
due to the effects of time-dependent linear/non-linear forces from
natural phenomena are represented by non-autonomous terms in the
associated models. One of the important concepts for describing the
asymptotic behavior of non-autonomous evolution equations is the
pullback attractor, which generalized the notation of global
attractor for non-autonomous dynamical systems \cite{Arnold,
Chueshov, CDF}. The pullback attractors are different from the
uniform attractor (see e.g. \cite{CLR, CV}) in that they employ
techniques of non-autonomous equations more straightly.

Global attractors, uniform attractors and pullback attractors all
play important roles in the fields of asymptotic behavior of
autonomous and non-autonomous infinite dynamical systems \cite{CLR,
CV, Hale, KR, SY, Temam}. Sometimes, the forwards dynamics may be
hard to describe, in this case, there is not even an attracting
trajectory (which would in general be a moving object) that
describes the dynamics. Especially, in the stochastic cases, the
pullback process produces a fixed subset of the phrase space.
Pullback attractors attract all bounded set, then become appropriate
alternatives to study the asymptotic behavior of dynamical systems.

Lattice dynamical systems, which are coupled systems with ODEs on
infinite lattices, have drawn much attention from mathematicians and
physicists recently, due to the wide range of applications in
various areas (e.g. \cite{Chow}). For autonomous deterministic
lattice dynamical systems, we can see e.g. \cite{BLW, Zh1, Zh2, Zh4,
Zh5} for the existence and approximations of attractors. For
non-autonomous deterministic cases, we can see e.g. \cite{Zh8, ZH,
Zh7, Zh9} for the existence and continuity of kernel section,
uniform attractors and pullback exponential attractors. As in the
stochastic cases, stochastic lattice dynamical systems (SLDS) arise
naturally while random influences or uncertainties are taken into
account in lattice dynamical systems, these noises may play an
important role as intrinsic phenomena rather than just compensation
of defects in deterministic models. Since Bates et al. \cite{BLL}
initiated the study of SLDS, lots of work have been done regarding
the existence of global random attractors for SLDS with white
additive/multiplicative noises in regular or weight spaces of
infinite sequences, see e.g. \cite{Wang2, CL, HSZ, Zhou1}. For
lattice dynamical systems perturbed by other ``rough" noises, we can
refer to e.g. \cite{Gu1, Gu2} for more details. As we can seen that
all the systems above are considered with the autonomous
deterministic external forcing terms (if indeed exist!). There is no
results on pullback attractors for general non-autonomous SLDS (with
time-dependent deterministic coefficients and external forcing
terms) as far as we know.

Motivated by \cite{Wang1} and \cite{ZH}, we consider the existence
of non-autonomous dynamical systems generated by lattice
differential equations with both non-autonomous deterministic and
stochastic forcing terms. By borrowing the main framework of
\cite{Wang1} on two parametric space, we first set up the abstract
structure for the continuous cocycle. As a typical example, we
investigate the following stochastic lattice dynamical systems
(SLDS) with time-dependent external forcing terms:
\begin{equation}
\label{lattice1} \frac{du_i(t)}{dt}=\nu_i(t)
(u_{i-1}-2u_i+u_{i+1})-\lambda_i(t) u_i-f_i(u_i,
t)+g_i(t)+u_i\circ\frac{dw(t)}{dt},
\end{equation}
where $i\in \mathbb{Z}$, $\mathbb{Z}$ denotes the integer set;
$u_i\in \mathbb{R}$, $\nu_i(t)$ and $\lambda_i(t)$ are locally
integrable in $t$; $g_i\in C(\mathbb{R}, \mathbb{R})$ and $f_i\in
C(\mathbb{R}\times \mathbb{R}, \mathbb{R})$ satisfies proper
dissipative conditions; $w(t)$ is a Brownian motion (Wiener process)
and $\circ$ denotes the Stratonovich sense of the stochastic term.

Stochastic systems similar to \eqref{lattice1} are discrete of the
\textit{Reaction-Diffusion equation} which used to model the
phenomena of stochastic resonance in biology and physics, where $f$
is a time-dependent input signal and $w$ is a Wiener process used to
test the impact of stochastic fluctuations on $f$. For this topic,
we can see e.g. \cite{GH, Tuckwell, WM, WPP} and the references
therein. The main difference between system \eqref{lattice1} and the
model considered in \cite{BLL} is the coefficients and deterministic
external forcing terms are time-dependent. In this case, the
existing results of one parametric space cannot be applied directly.
We first need to introduce two parametric space to describe the
dynamics of the SLDS: one is responsible for deterministic forcing
and the other is responsible for stochastic perturbations. Then we
applied the skeleton to \eqref{lattice1}.

The outline of the paper is as follows. In the next section, we
recall some results regarding pullback attractor for non-autonomous
dynamical systems over two parametric spaces in \cite{Wang1}. In
section \ref{cocycle}, we establish the conditions on the existence
of pullback attractors for cocycles over two parametric spaces. In
section \ref{lattice equations}, a sufficient condition for the
existence of pullback attractors for lattice differential equations
with both non-autonomous deterministic and random forcing terms is
given. As a example of the result in previous sections, the
existence of pullback attractor for the first-order SLDS with
time-dependent deterministic force and multiplicative white noise is
studied in the last section.

\section{Preliminaries}\label{2}
For the reader's convenience, we recall the theory of pullback
random dynamical systems over two parametric spaces in \cite{Wang1}.

Let $\Omega_1$ be a nonempty  set and $\{\theta_{1,t}\}_{t \in
\mathbb{R}}$  be a family of mappings   from $\Omega_1$ into itself
such that $\theta_{1, 0}  $ is the identity on $\Omega_1$ and
$\theta_{1,  s+t}  = \theta_{1,t}   \theta_{1,s}  $ for all $t, s
\in \mathbb{R}$. Let $(\Omega_2, \mathcal{F}_2, P)$ be a probability
space and $\theta_2 : \mathbb{R}\times \Omega_2 \to \Omega_2$ be  a
$(\mathcal{B} (\mathbb{R}) \times \mathcal{F}_2, \mathcal{F}_2)$
-measurable mapping such that $\theta_2(0,\cdot) $ is the identity
on $\Omega_2$, $\theta_2 (s+t,\cdot) = \theta_2 (t,\cdot) \theta_2
(s,\cdot)$ for all $t, s \in \mathbb{R}$ and $P \theta_2 (t,\cdot)
=P$ for all $t \in \mathbb{R}$. We usually write $\theta_2 (t,
\cdot)$ as $\theta_{2,t}$ and   call both $(\Omega_1,
\{\theta_{1,t}\}_{t \in \mathbb{R}})$ and $(\Omega_2, \mathcal{F}_2,
P,  \{\theta_{2,t}\}_{t \in \mathbb{R}})$ a parametric   dynamical
system.

Let  $(X, d)$   be  a complete separable  metric space with  Borel
$\sigma$-algebra $\mathcal{B} (X)$. Denote by $2^X$   the collection
of all subsets of $X$. A set-valued mapping $K: \Omega_1 \times
\Omega_2 \to  2^X$ is called measurable with respect to
$\mathcal{F}_2$ in $\Omega_2$ if  the value  $K(\omega_1, \omega_2)$
is a closed  nonempty subset of $X$ for all $\omega_1 \in \Omega_1$
and $\omega_2 \in \Omega_2$, and the mapping $ \omega_2 \in \Omega_2
 \to d(x, K(\omega_1, \omega_2) )$
is $(  \mathcal{F}_2, \ \mathcal{B}(\mathbb{R}) )$-measurable for
every  fixed $x \in X$ and $\omega_1 \in \Omega_1$. If $K$ is
measurable  with respect to $\mathcal{F}_2$ in $\Omega_2$,   then we
say that the family $\{K(\omega_1, \omega_2): \omega_1 \in \Omega_1,
\omega_2 \in \Omega_2 \}$ is measurable with respect to
$\mathcal{F}_2$ in $\Omega_2$. We now define a cocycle on $X$ over
two parametric spaces.

\begin{definition}\label{Def1}
Let $(\Omega_1,  \{\theta_{1,t}\}_{t \in \mathbb{R}})$ and
$(\Omega_2, \mathcal{F}_2, P,  \{\theta_{2,t}\}_{t \in \mathbb{R}})$
be parametric  dynamical systems. A mapping $\Phi$: $ \mathbb{R}^+
\times \Omega_1 \times \Omega_2 \times X \to X$ is called a
\textit{continuous  cocycle} on $X$ over $(\Omega_1,
\{\theta_{1,t}\}_{t \in \mathbb{R}})$ and $(\Omega_2, \mathcal{F}_2,
P,  \{\theta_{2,t}\}_{t \in \mathbb{R}})$ if   for all $\omega_1\in
\Omega_1$, $\omega_2 \in \Omega_2 $ and    $t, \tau \in
\mathbb{R}^+$, the following conditions (i)-(iv) are satisfied:
\begin{itemize}
\item [(i)]   $\Phi (\cdot, \omega_1, \cdot, \cdot): \mathbb{R}^+
\times \Omega_2 \times X \to X$ is $(\mathcal{B} (\mathbb{R}^+)
\times \mathcal{F}_2 \times \mathcal{B} (X), \
\mathcal{B}(X))$-measurable;

\item[(ii)]    $\Phi(0, \omega_1, \omega_2, \cdot) $ is the
identity on $X$;

\item[(iii)]    $\Phi(t+\tau, \omega_1, \omega_2, \cdot) =
\Phi(t, \theta_{1,\tau} \omega_1,  \theta_{2,\tau} \omega_2, \cdot)
\Phi(\tau, \omega_1, \omega_2, \cdot)$;

\item[(iv)]    $\Phi(t, \omega_1, \omega_2,  \cdot): X \to  X$
is continuous.
    \end{itemize}
\end{definition}

In the sequel, we use  $\mathcal{D}(X)$ to denote
 a  collection  of  some families of  nonempty subsets of $X$:
\begin{equation*}
\label{defcald} {\mathcal{D}(X)} = \{ D =\{ D(\omega_1, \omega_2 )
\subseteq X: \ D(\omega_1, \omega_2 ) \neq \emptyset,  \ \omega_1
\in \Omega_1, \ \omega_2 \in \Omega_2\} \}.
\end{equation*}

\begin{definition}
Let $\mathcal{D}(X)$ be a collection of some families of nonempty
subsets of $X$ and $K=\{K(\omega_1, \omega_2): \omega_1 \in
\Omega_1, \ \omega_2 \in \Omega_2\} \in \mathcal{D}(X)$. Then $K$ is
called a  \textit{$\mathcal{D}(X)$-pullback absorbing set} for
$\Phi$ if for all $\omega_1 \in \Omega_1$, $\omega_2 \in \Omega_2 $
and for every $B \in \mathcal{D}(X)$, there exists $T= T(B,
\omega_1, \omega_2)>0$ such that
\begin{equation*}
\label{abs1} \Phi(t, \theta_{1,-t} \omega_1, \theta_{2, -t}
\omega_2, B(\theta_{1,-t} \omega_1, \theta_{2,-t} \omega_2  ))
\subseteq  K(\omega_1, \omega_2) \quad \text{for all} \ t \ge T.
\end{equation*}
If, in addition, for all $\omega_1 \in \Omega_1$ and $\omega_2 \in
\Omega_2$, $K(\omega_1, \omega_2)$ is a closed nonempty subset of
$X$ and $K$ is measurable with respect to the $P$-completion of
$\mathcal{F}_2$ in $\Omega_2$, then we say $K$ is a  closed
measurable \textit{$\mathcal{D}(X)$-pullback absorbing  set} for
$\Phi$.
\end{definition}

\begin{definition} \label{asycomp}
Let $\mathcal{D}(X)$ be a collection of  some families of nonempty
subsets of $X$. Then $\Phi$ is said to be
\textit{$\mathcal{D}(X)$-pullback asymptotically compact} in $X$ if
for all $\omega_1 \in \Omega_1$ and $\omega_2 \in \Omega_2$,    the
sequence
\begin{equation*}
\label{asycomp1} \{\Phi(t_n, \theta_{1, -t_n} \omega_1, \theta_{2,
-t_n} \omega_2, x_n)\}_{n=1}^\infty \text{  has a convergent
subsequence  in }   X
\end{equation*}
 whenever
$t_n \to \infty$, and $ x_n\in   B(\theta_{1, -t_n}\omega_1,
\theta_{2, -t_n} \omega_2 )$   with $\{B(\omega_1, \omega_2):
\omega_1 \in \Omega_1, \ \omega_2 \in \Omega_2 \}   \in
\mathcal{D}(X)$.
\end{definition}

\begin{definition} \label{defatt}
Let $\mathcal{D}(X)$ be a collection of some families of nonempty
subsets of $X$ and
 $\mathcal{A} = \{\mathcal{A} (\omega_1, \omega_2): \omega_1 \in \Omega_1,
\omega_2 \in \Omega_2 \} \in \mathcal{D}(X) $. Then $\mathcal{A}$ is
called a    \textit{$\mathcal{D}(X)$-pullback attractor} for $\Phi$
if the following  conditions (i)-(iii) are  fulfilled:
\begin{itemize}
\item [(i)]   $\mathcal{A}$ is measurable
with respect to the $P$-completion of $\mathcal{F}_2$ in $\Omega_2$
and $\mathcal{A}(\omega_1, \omega_2)$ is compact for all $\omega_1
\in \Omega_1$ and    $\omega_2 \in \Omega_2$.

\item[(ii)]   $\mathcal{A}$  is invariant, that is,
for every $\omega_1 \in \Omega_1$ and $\omega_2 \in \Omega_2$,
$$ \Phi(t, \omega_1, \omega_2, \mathcal{A}(\omega_1, \omega_2)   )
= \mathcal{A} (\theta_{1,t} \omega_1, \theta_{2,t} \omega_2 ), \ \
\forall \   t \ge 0.
$$

\item[(iii)]   $\mathcal{A}  $
attracts  every  member   of   $\mathcal{D}(X)$,  that is, for every
$B = \{B(\omega_1, \omega_2): \omega_1 \in \Omega_1, \omega_2 \in
\Omega_2\}
 \in \mathcal{D}(X)$ and for every $\omega_1 \in \Omega_1$ and
$\omega_2 \in \Omega_2$,
$$ \lim_{t \to  \infty} d (\Phi(t, \theta_{1,-t}\omega_1,
\theta_{2,-t}\omega_2, B(\theta_{1,-t}\omega_1,
\theta_{2,-t}\omega_2) ) , \mathcal{A} (\omega_1, \omega_2 ))=0.
$$
 \end{itemize}
\end{definition}

The following result on the  existence and uniqueness of
\textit{$\mathcal{D}(X)$-pullback attractors} for $\Phi$ can be
found in \cite{Wang1}.

 \begin{proposition} \label{att}
Let $\Phi$  be a continuous cocycle on $X$ over $(\Omega_1,
\{\theta_{1,t}\}_{t \in \mathbb{R}})$ and $(\Omega_2, \mathcal{F}_2,
P,  \{\theta_{2,t}\}_{t \in \mathbb{R}})$. Suppose that
$K=\{K(\omega_1, \omega_2): \omega_1 \in \Omega_1, \omega_2 \in
\Omega_2\}\in \mathcal{D}(X)$ is a closed measurable (w.r.t. the
$P$-completion of $\mathcal{F}_2$) \textit{$\mathcal{D}(X)$-pullback
absorbing set} for $\Phi$ in $\mathcal{D}(X)$ and $\Phi$ is
\textit{$\mathcal{D}(X)$-pullback asymptotically compact} in $X$.
Then $\Phi$ has a unique \textit{$\mathcal{D}(X)$-pullback
attractor} $\mathcal{A}=\{\mathcal{A}(\omega_1, \omega_2):  \omega_1
\in \Omega_1, \omega_2 \in \Omega_2\}\in \mathcal{D}(X)$ which is
given by
\begin{align*}\label{attform1}
\mathcal{A} (\omega_1, \omega_2) &=\bigcap_{\tau \ge 0} \overline{
\bigcup_{t\ge \tau} \Phi(t, \theta_{1,-t} \omega_1, \theta_{2, -t}
\omega_2, K(\theta_{1,-t} \omega_1, \theta_{2,-t}\omega_2))}.
 \end{align*}
  \end{proposition}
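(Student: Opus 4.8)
The plan is to construct the attractor explicitly as the pullback $\Omega$-limit set of the absorbing set $K$ and then to verify, one by one, the three defining properties of Definition \ref{defatt} together with uniqueness. First I would set up the basic object: for any $B \in \mathcal{D}(X)$ and any $(\omega_1,\omega_2)$, define the pullback $\Omega$-limit set
$$\Omega(B,\omega_1,\omega_2)=\bigcap_{\tau\ge 0}\overline{\bigcup_{t\ge\tau}\Phi(t,\theta_{1,-t}\omega_1,\theta_{2,-t}\omega_2,B(\theta_{1,-t}\omega_1,\theta_{2,-t}\omega_2))},$$
and record the standard sequential characterization: $y\in\Omega(B,\omega_1,\omega_2)$ if and only if there are $t_n\to\infty$ and $x_n\in B(\theta_{1,-t_n}\omega_1,\theta_{2,-t_n}\omega_2)$ with $\Phi(t_n,\theta_{1,-t_n}\omega_1,\theta_{2,-t_n}\omega_2,x_n)\to y$. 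The candidate attractor is then $\mathcal{A}(\omega_1,\omega_2)=\Omega(K,\omega_1,\omega_2)$.

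Next I would establish compactness and attraction. Nonemptiness and precompactness of $\mathcal{A}(\omega_1,\omega_2)$ follow directly from the $\mathcal{D}(X)$-pullback asymptotic compactness hypothesis (Definition \ref{asycomp}) applied to the sequences furnished by the characterization; since $\mathcal{A}(\omega_1,\omega_2)$ is an intersection of closed sets it is closed, hence compact. For the attraction property (iii) of Definition \ref{defatt} I would argue by contradiction: if $\mathcal{A}$ failed to attract some $B\in\mathcal{D}(X)$, there would be $\varepsilon>0$, $t_n\to\infty$ and $x_n\in B(\theta_{1,-t_n}\omega_1,\theta_{2,-t_n}\omega_2)$ with $d(\Phi(t_n,\theta_{1,-t_n}\omega_1,\theta_{2,-t_n}\omega_2,x_n),\mathcal{A}(\omega_1,\omega_2))\ge\varepsilon$; asymptotic compactness yields a convergent subsequence whose limit lies in $\Omega(B,\omega_1,\omega_2)$, while the absorbing property of $K$ (splitting $\Phi(t_n,\ldots)=\Phi(t_n-T,\ldots)\circ\Phi(T,\ldots)$ via the cocycle identity, so that the tails enter $K$) forces $\Omega(B,\omega_1,\omega_2)\subseteq\mathcal{A}(\omega_1,\omega_2)$, contradicting the gap.

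For invariance (property (ii) of Definition \ref{defatt}) I would prove the two inclusions $\Phi(t,\omega_1,\omega_2,\mathcal{A}(\omega_1,\omega_2))\subseteq\mathcal{A}(\theta_{1,t}\omega_1,\theta_{2,t}\omega_2)$ and its reverse separately. The forward inclusion uses the cocycle identity (iii) together with the continuity (iv) of Definition \ref{Def1} to push a convergent defining sequence forward by a fixed time $t$; the reverse inclusion uses asymptotic compactness once more to extract, from the shifted sequence $\Phi(t_n-t,\theta_{1,-t_n}\omega_1,\theta_{2,-t_n}\omega_2,x_n)$, a convergent subsequence that produces a preimage under $\Phi(t,\omega_1,\omega_2,\cdot)$. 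The main obstacle, and the step I would treat most carefully, is measurability: showing $\mathcal{A}$ is measurable with respect to the $P$-completion of $\mathcal{F}_2$. The plan is to express $\omega_2\mapsto d(x,\mathcal{A}(\omega_1,\omega_2))$ through the measurable data, namely the measurability of $\Phi(\cdot,\omega_1,\cdot,\cdot)$ in (i) of Definition \ref{Def1} and the measurability of $K$, and then to invoke a measurable selection/projection theorem valid on the complete separable space $X$, so that the countable unions and intersections defining $\Omega(K,\cdot)$ preserve measurability after passing to the $P$-completion. Finally, uniqueness is routine: if $\mathcal{A}$ and $\mathcal{A}'$ are both $\mathcal{D}(X)$-pullback attractors, applying the attraction property of each to the invariant family of the other yields $\mathcal{A}(\omega_1,\omega_2)=\mathcal{A}'(\omega_1,\omega_2)$ for all $(\omega_1,\omega_2)$, which simultaneously identifies the attractor with the explicit $\Omega$-limit formula in the statement.
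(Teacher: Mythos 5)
You should know at the outset that the paper contains no proof of Proposition \ref{att} to compare against: the authors state explicitly that the result ``can be found in \cite{Wang1}'' and quote it. Your proposal in effect reconstructs the standard argument given in that cited reference: take $\mathcal{A}(\omega_1,\omega_2)=\Omega(K,\omega_1,\omega_2)$, derive nonemptiness and compactness from pullback asymptotic compactness, attraction by contradiction together with $\Omega(B)\subseteq\Omega(K)$ via absorption, invariance by pushing and pulling defining sequences with the cocycle identity and continuity, and uniqueness by letting each candidate attractor attract the invariant fibres of the other. That is the right route, and your outline of each step is essentially sound.

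Two points need repair before this would count as a proof. First, your absorption splitting is written in the wrong order: $\Phi(t_n,\cdot)=\Phi(t_n-T,\cdot)\circ\Phi(T,\cdot)$ applies the fixed time first, which does not place you in $K$. You must run the long time first: $\Phi(t_n, \theta_{1,-t_n}\omega_1, \theta_{2,-t_n}\omega_2, x_n)=\Phi\bigl(s, \theta_{1,-s}\omega_1, \theta_{2,-s}\omega_2, \Phi(t_n-s, \theta_{1,-t_n}\omega_1, \theta_{2,-t_n}\omega_2, x_n)\bigr)$, where the inner image lies in $K(\theta_{1,-s}\omega_1,\theta_{2,-s}\omega_2)$ once $t_n-s$ exceeds the absorbing time at that shifted fibre; letting $s\to\infty$ then gives $\Omega(B)\subseteq\mathcal{A}$. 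Second, your measurability plan speaks of ``countable unions and intersections,'' but the union over $t\ge\tau$ in the attractor formula is uncountable (the intersection over $\tau$ reduces to integer $\tau$ by nestedness, so that part is harmless), and since $\Phi$ is only assumed \emph{measurable} in $t$, not continuous, you cannot reduce to rational times. The function $\omega_2\mapsto d\bigl(x,\overline{\bigcup_{t\ge\tau}\Phi(t,\theta_{1,-t}\omega_1,\theta_{2,-t}\omega_2,K(\cdot))}\bigr)=\inf_{t\ge\tau}d\bigl(x,\Phi(t,\cdot,K(\cdot))\bigr)$ must instead be handled by the measurable projection theorem over the completed probability space; this is precisely why the conclusion asserts measurability only with respect to the $P$-completion of $\mathcal{F}_2$, and it is how \cite{Wang1} proceeds. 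You correctly name the projection theorem as the tool, so this is a matter of precision rather than a wrong idea, but as stated the ``countable operations'' claim would fail. With these two corrections your outline coincides with the proof in the cited source.
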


To describe the size of subsets in a Banach space $X$, we introduce
the concept of \textit{Kolmogorov's $\varepsilon$-entropy}. Let $Y$
be a subset of $X$. Given $\varepsilon>0$, we define

\begin{equation*}
n_{\varepsilon}(Y):=\min\{n\ge 1: Y\subset
\bigcup_{i=1}^n\mathcal{N}(x_i, \varepsilon)\quad \mbox{for some}
\quad x_1, \ldots, x_n\in X \},
\end{equation*}
where $\mathcal{N}(x_i, \varepsilon)=\{y\in X:
\|y-x_i\|_X<\varepsilon\}$. The \textit{Kolmogorov
$\varepsilon$-entropy} of the subset $Y$ of $X$ is the number
\begin{equation}\label{Kol}
\mathbb{K}_{\varepsilon}(Y):=\ln n_{\varepsilon}(Y)\in [0, +\infty].
\end{equation}

\section{Pullback attractors for cocycles in
$\ell^2$}\label{cocycle} In this section, we provide some sufficient
conditions for the existence of pullback attractors for cocycles in
$\ell^2$.

Let $D$ be a bounded nonempty subset of $\ell^2$, denote by
$\|D\|=\sup_{u\in D}\|u\|$. Suppose $D=\{D(\omega_1, \omega_2):
\omega_1\in \Omega_1, \omega_2\in \Omega_2\}$ is a family of bounded
nonempty subsets of $\ell^2$ satisfying, for every $\gamma>0$,
\begin{equation}\label{s1}
\lim_{s\rightarrow+\infty}e^{-\gamma s} \|D(\theta_{1, -s}\omega_1,
\theta_{2, -s}\omega_2)\|^2=0.
\end{equation}
Denote by $\mathcal{D}(\ell^2)$ the collection of all family of
bounded nonempty subsets of $\ell^2$,
\begin{equation*}\label{s2}
\mathcal{D}(\ell^2)=\{D=\{D(\omega_1, \omega_2): \omega_1\in
\Omega_1, \omega_2\in \Omega_2\}: D \ \mbox{satisfies} \
\eqref{s1}\}.
\end{equation*}

\begin{definition}\label{Asym null}
A mapping $\Phi$: $ \mathbb{R}^+ \times \Omega_1 \times \Omega_2
\times \ell^2 \to \ell^2$ is said to be \textit{asymptotically null}
in $\mathcal{D}(\ell^2)$ if for a.e. $\omega_1\in \Omega_1,
\omega_2\in \Omega_2$, any $B(\omega_1, \omega_2)\in
\mathcal{D}(\ell^2)$, and any $\varepsilon>0$, there exist
$T(\varepsilon, \omega_1, \omega_2, B(\omega_1, \omega_2))>0$ and
$I(\varepsilon, \omega_1, \omega_2, B(\omega_1, \omega_2))\in
\mathbb{N}$ such that
\begin{equation*}
\label{3.1} \sum_{|i|>I(\varepsilon, \omega_1, \omega_2, B(\omega_1,
\omega_2))}|(\Phi(t, \theta_{1, -t}\omega_1, \theta_{2, -t}\omega_2,
u(\theta_{1, -t}\omega_1, \theta_{2, -t}\omega_2)))_i|^2\leq
\varepsilon^2,
\end{equation*}
for all $t\geq T(\varepsilon, \omega_1, \omega_2, B(\omega_1,
\omega_2))$ and $u(\omega_1, \omega_2)\in B(\omega_1, \omega_2)$.
\end{definition}

\begin{theorem}\label{existence }
Suppose that
\begin{itemize}
\item [$(\mathbf{a})$] there exists a closed measurable
 (w.r.t. the $P$-completion
of $\mathcal{F}_2$) $\mathcal{D}(\ell^2)$-pullback absorbing set $K$
in $\mathcal{D}(\ell^2)$ such that for a.e. $\omega_1\in \Omega_1,
\omega_2\in \Omega_2$, any $B(\omega_1, \omega_2)\in
\mathcal{D}(\ell^2)$, there exists $T_B(\omega_1, \omega_2)>0$
yielding\\ $\Phi(t, \theta_{1, -t}\omega_1, \theta_{2, -t}\omega_2)
B(\theta_{1, -t}\omega_1, \theta_{2, -t}\omega_2)\subset K$ for all
$t\geq T_B(\omega_1, \omega_2)$;

\item [$(\mathbf{b})$] $\Phi$: $ \mathbb{R}^+ \times \Omega_1
\times \Omega_2 \times \ell^2 \to \ell^2$ is asymptotically null on
$K$, i.e., for a.e. $\omega_1\in \Omega_1, \omega_2\in \Omega_2$,
any $B(\omega_1, \omega_2)\in \mathcal{D}(\ell^2)$, and any
$\varepsilon>0$, there exist $T(\varepsilon, \omega_1, \omega_2,
K)>0$ and $I_0(\varepsilon, \omega_1, \omega_2, K)\in \mathbb{N}$
such that
\begin{eqnarray*}
\label{3.2} &&\sup_{u\in K}\sum_{|i|>I_0(\varepsilon, \omega_1,
\omega_2, K)}|(\Phi(t, \theta_{1, -t}\omega_1, \theta_{2,
-t}\omega_2, u(\theta_{1, -t}\omega_1, \theta_{2,
-t}\omega_2)))_i|^2\leq \varepsilon^2,\nonumber\\
&& ~~~~~~~~\forall t\geq T(\varepsilon, \omega_1, \omega_2,
B(\omega_1, \omega_2)).
\end{eqnarray*}
\end{itemize}
Then
\begin{itemize}
\item [(i)] $\Phi$ possesses a unique $\mathcal{D}(\ell^2)$-pullback
attractor is given  by, for each $\omega_1  \in \Omega_1$   and
$\omega_2 \in \Omega_2$,
\begin{align*}\label{attform1}
\mathcal{A} (\omega_1, \omega_2) &= \bigcap_{\tau \ge T_K(\omega_1,
\omega_2)}
 \overline{ \bigcup_{t\ge \tau} \Phi(t, \theta_{1,-t} \omega_1, \theta_{2, -t}
 \omega_2, K(\theta_{1,-t} \omega_1, \theta_{2,-t}\omega_2  ))};
\end{align*}
\item [(ii)] the Kolmogorov $\epsilon$-entropy of $\mathcal{A} (\omega_1,
\omega_2)$ satisfies
\begin{eqnarray*}
\mathbb{K}_{\varepsilon}\leq (2I_0(\varepsilon, \omega_1, \omega_2,
K)+1)\ln(\lfloor\frac{2r_0(\omega_1,
\omega_2)\sqrt{2I_0(\varepsilon, \omega_1, \omega_2,
K)+1}}{\varepsilon}\rfloor+1),
\end{eqnarray*}
where $r_0(\omega_1, \omega_2)=\sup_{u(\omega_1, \omega_2)\in
K}\|u(\omega_1, \omega_2)\|$.
\end{itemize}
\end{theorem}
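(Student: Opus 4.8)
The plan is to deduce part (i) from the abstract existence result, Proposition \ref{att}, whose hypotheses are a continuous cocycle (already assumed), a closed measurable $\mathcal D(\ell^2)$-pullback absorbing set (supplied by $(\mathbf a)$), and $\mathcal D(\ell^2)$-pullback asymptotic compactness. Thus the entire content of (i) is to verify asymptotic compactness in the sense of Definition \ref{asycomp}, and for this I will use the standard precompactness criterion in $\ell^2$: a sequence that is bounded and has uniformly small tails (i.e. $\sum_{|i|>N}|(v_n)_i|^2$ is uniformly small for $N$ large) is precompact. I would prove this criterion first — boundedness yields a weakly convergent subsequence $v_{n_k}\rightharpoonup v$; weak convergence forces componentwise convergence, hence strong convergence on each finite window $|i|\le N$; the uniform tail bound (inherited by $v$ through Fatou) then upgrades this to strong convergence in $\ell^2$.

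The core step is therefore the uniform tail estimate for $v_n:=\Phi(t_n,\theta_{1,-t_n}\omega_1,\theta_{2,-t_n}\omega_2,x_n)$, where $t_n\to\infty$ and $x_n\in B(\theta_{1,-t_n}\omega_1,\theta_{2,-t_n}\omega_2)$ with $B\in\mathcal D(\ell^2)$. Boundedness is immediate: by $(\mathbf a)$, $v_n\in K(\omega_1,\omega_2)$ for all large $n$, and $K(\omega_1,\omega_2)$ is bounded. For the tails, fix $\varepsilon>0$, let $T^\ast:=T(\varepsilon,\omega_1,\omega_2,K)$ and $I_0:=I_0(\varepsilon,\omega_1,\omega_2,K)$ from $(\mathbf b)$, and split the evolution at the \emph{fixed} intermediate time $T^\ast$ via the cocycle property:
\[
v_n=\Phi\bigl(T^\ast,\theta_{1,-T^\ast}\omega_1,\theta_{2,-T^\ast}\omega_2,w_n\bigr),\qquad w_n:=\Phi\bigl(t_n-T^\ast,\theta_{1,-t_n}\omega_1,\theta_{2,-t_n}\omega_2,x_n\bigr).
\]
Because $T^\ast$ is fixed, the absorbing time $T_B(\theta_{1,-T^\ast}\omega_1,\theta_{2,-T^\ast}\omega_2)$ is a fixed number, so $(\mathbf a)$, applied at the base fibre $(\theta_{1,-T^\ast}\omega_1,\theta_{2,-T^\ast}\omega_2)$, gives $w_n\in K(\theta_{1,-T^\ast}\omega_1,\theta_{2,-T^\ast}\omega_2)$ for all large $n$. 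Now $v_n$ has exactly the pullback form appearing in $(\mathbf b)$, with $t=T^\ast\ge T^\ast$ and initial datum $w_n\in K$; hence $\sum_{|i|>I_0}|(v_n)_i|^2\le\varepsilon^2$ for all large $n$. This is the required uniform tail bound, the precompactness criterion applies, and Definition \ref{asycomp} is verified. Proposition \ref{att} then yields existence, uniqueness, and the representation formula; the intersection over $\tau\ge T_K(\omega_1,\omega_2)$ coincides with that over $\tau\ge0$ because $\overline{\bigcup_{t\ge\tau}\Phi(t,\cdots)}$ is nonincreasing in $\tau$.

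For part (ii) I would first observe $\mathcal A(\omega_1,\omega_2)\subset K(\omega_1,\omega_2)$: taking $\tau\ge T_K(\omega_1,\omega_2)$ in the formula and using $(\mathbf a)$ for $K$ itself together with the closedness of $K$, the union and its closure remain inside $K(\omega_1,\omega_2)$. Consequently $\|a\|\le r_0(\omega_1,\omega_2)$ for every $a\in\mathcal A(\omega_1,\omega_2)$. Next, $(\mathbf b)$ shows every element of $\bigcup_{t\ge T^\ast}\Phi(t,\theta_{1,-t}\omega_1,\theta_{2,-t}\omega_2,K(\cdots))$ has tail mass $\le\varepsilon^2$ beyond $I_0$; since $\{u:\sum_{|i|>I_0}|u_i|^2\le\varepsilon^2\}$ is closed, this persists on the closure and hence on $\mathcal A$, so $\sum_{|i|>I_0}|a_i|^2\le\varepsilon^2$. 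The tail being negligible, covering $\mathcal A(\omega_1,\omega_2)$ reduces to covering its projection onto the $m:=2I_0+1$ coordinates $\{|i|\le I_0\}$, a subset of the ball of radius $r_0$ in $\mathbb R^m$. Laying a coordinate grid of spacing $\varepsilon/\sqrt m$ over $[-r_0,r_0]$ in each of the $m$ directions uses $\lfloor 2r_0\sqrt m/\varepsilon\rfloor+1$ points per axis, hence $\bigl(\lfloor 2r_0\sqrt{2I_0+1}/\varepsilon\rfloor+1\bigr)^{2I_0+1}$ balls in total; taking logarithms gives the stated bound on $\mathbb K_\varepsilon$.

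The main obstacle is the asymptotic-compactness step: the genuine work is the uniform tail estimate and its combination with weak convergence, and the one point requiring care is the cocycle decomposition — splitting at a \emph{fixed} time $T^\ast$ rather than a moving one is precisely what keeps the absorbing time $T_B$ a constant and lets $(\mathbf a)$ and $(\mathbf b)$ interlock cleanly. Part (ii) is then essentially a finite-dimensional volume count; its only subtlety is verifying that attractor elements inherit the uniform small-tail property, which follows from closedness of the tail sublevel sets.
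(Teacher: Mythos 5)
Your proposal is correct and follows essentially the same route as the paper: both verify the hypotheses of Proposition \ref{att} by combining the absorbing set from $(\mathbf{a})$ with the uniform tail estimate from $(\mathbf{b})$ to get $\mathcal{D}(\ell^2)$-pullback asymptotic compactness, and both obtain the entropy bound by decomposing elements into a $(2I_0+1)$-dimensional part plus a small tail and covering the finite-dimensional cube of radius $r_0$ with a grid of $\varepsilon/2$-balls. The only (harmless) divergences are that you prove the $\ell^2$ precompactness criterion via weak convergence while the paper constructs the finite $\varepsilon$-net directly, and that your fixed-time cocycle splitting at $T^\ast$ explicitly reduces general $B \in \mathcal{D}(\ell^2)$ to the absorbing set --- a step the paper glosses over by taking the initial data $p_n$ in $K$ from the start --- so your write-up is, if anything, slightly more careful on that point.
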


\begin{proof} The proof is based on Theorem 3.1 in \cite{HSZ} and
Proposition \ref{att} under slightly modifications.

(i) For a.e. $\omega_1\in \Omega_1, \omega_2\in \Omega_2$ and
$t_n\rightarrow\infty$ as $n\rightarrow \infty$, let $p_n(\omega_1,
\omega_2)\in K(\theta_{1, -t_n}\omega_1, \theta_{2,
-t_n}\omega_2)\in \mathcal{D}(\ell^2) \ (n=1, 2, \cdots)$ and
$$u^{(n)}(\omega_1, \omega_2)=\Phi(t_n, \theta_{1,
-t_n}\omega_1, \theta_{2, -t_n}\omega_2),$$ where
$u_i^{(n)}(\omega_1, \omega_2)=(\Phi(t_n, \theta_{1, -t_n}\omega_1,
\theta_{2, -t_n}\omega_2))_i, i\in \mathbb{Z}$. By $(\mathbf{a})$,
there exists $N_1(\omega_1, \omega_2, K)\in \mathbb{N}$ such that
$t_n\geq T_K(\omega_1, \omega_2)$ if $n\ge N_1(\omega_1, \omega_2,
K)$. Hence
 $$u^{(n)}(\omega_1, \omega_2)=\Phi(t_n, \theta_{1, -t_n}\omega_1,
\theta_{2, -t_n}\omega_2)p_n(\omega_1, \omega_2)\in K, \ \forall
n\ge N_1(\omega_1, \omega_2, K).$$ Now let us prove that the set
$$\Lambda=\{u^{(n)}(\omega_1, \omega_2)=\Phi(t_n, \theta_{1,
-t_n}\omega_1, \theta_{2, -t_n}\omega_2)p_n(\omega_1,
\omega_2)\}_{n\ge N_1(\omega_1, \omega_2, K)}$$ is pre-compact, that
is, for any given $\varepsilon>0$, $\Lambda$ has a finite covering
of balls of radius $\varepsilon$. By condition $(\mathbf{b})$, there
exists $T_1(\varepsilon, \omega_1, \omega_2, K)>0$ and
$I_0(\varepsilon, \omega_1, \omega_2, K)\in \mathbb{N}$ such that
for $n\ge N_2(\varepsilon, \omega_1, \omega_2, K)$, we have that
$t_n\ge T_1(\varepsilon, \omega_1, \omega_2, K)$ and
\begin{eqnarray*}
\label{15} \sup_{n\ge N_1(\omega_1, \omega_2,
K)}(\sum_{|i|>I_0(\varepsilon, \omega_1, \omega_2, K)}|\Phi(t_n,
\theta_{1, -t_n}\omega_1, \theta_{2, -t_n}\omega_2) p_n(\omega_1,
\omega_2))_i|^2)^{\frac{1}{2}}\leq \frac{\varepsilon}{2}.
\end{eqnarray*}
Let $N_3(\varepsilon, \omega_1, \omega_2, K)=\max\{N_1(\omega_1,
\omega_2, K), N_2(\varepsilon, \omega_1, \omega_2, K)\}$. Thus for
any $n\ge N_3(\varepsilon, \omega_1, \omega_2, K)$,
$u^{(n)}(\omega_1, \omega_2)=(u_i^{(n)}(\omega_1, \omega_2))_{i\in
\mathbb{Z}}$ can be decomposed into
\begin{eqnarray}
&&\label{16} u^{(n)}(\omega_1, \omega_2)=(u_i^{(n)}(\omega_1,
\omega_2))_{i\in \mathbb{Z}}\nonumber\\
&&~~~~=(v_i^{(n)}(\omega_1, \omega_2))_{i\in
\mathbb{Z}}+(o_i^{(n)}(\omega_1, \omega_2))_{i\in
\mathbb{Z}}\nonumber\\
&&~~~~~~=v^{(n)}(\omega_1, \omega_2)+o^{(n)}(\omega_1, \omega_2),
\end{eqnarray}
where
$$v_i^{(n)}(\omega_1, \omega_2)=\left\{
       \begin{array}{lll}
u_i^{(n)}(\omega_1, \omega_2), &|i|\le I_0(\varepsilon, \omega_1,
\omega_2, K),\\
0, &|i|> I_0(\varepsilon, \omega_1, \omega_2, K),
       \end{array}
      \right. $$
and $$o_i^{(n)}(\omega_1, \omega_2)=\left\{
       \begin{array}{lll}
0, &|i|\le I_0(\varepsilon, \omega_1,
\omega_2, K),\\
u_i^{(n)}(\omega_1, \omega_2), &|i|> I_0(\varepsilon, \omega_1,
\omega_2, K).
       \end{array}
      \right. $$
Then for $n\ge N_3(\varepsilon, \omega_1, \omega_2, K)$, we obtain
\begin{eqnarray*}
\|v^{(n)}(\omega_1, \omega_2)\|^2&=&\sum_{|i|\le I_0(\varepsilon,
\omega_1, \omega_2, K)}|u_i^{(n)}(\omega_1,
\omega_2)|^2\nonumber\\
&\le& \|u^{(n)}(\omega_1, \omega_2)\|^2\le r^2_0(\omega_1,
\omega_2),
\end{eqnarray*}
\begin{eqnarray*}
\|o^{(n)}(\omega_1, \omega_2)\|^2=\sum_{|i|> I_0(\varepsilon,
\omega_1, \omega_2, K)}|u_i^{(n)}(\omega_1, \omega_2)|^2\le
\frac{\varepsilon^2}{4},
\end{eqnarray*}
and $$|v_i^{(n)}(\omega_1, \omega_2)|\le r_0(\omega_1, \omega_2),$$
for all $|i|\le I_0(\varepsilon, \omega_1, \omega_2, K)$, where
$r_0(\omega_1, \omega_2)$ defined in (ii). Now let
\begin{eqnarray*}
\label{17} &&\Gamma(\omega_1, \omega_2)=\{v=(v_i)_{|i|\le
I_0(\varepsilon, \omega_1, \omega_2, K)}\in
\mathbb{R}^{2I_0(\varepsilon, \omega_1, \omega_2, K)+1}: \nonumber\\
&&~~~~~~v_i\in \mathbb{R}, |v_i|\le r_0(\omega_1, \omega_2)\},
\end{eqnarray*}
and
\begin{eqnarray*}
\label{18} &&n_{\varepsilon, \omega_1, \omega_2}(\Gamma(\omega_1,
\omega_2))\nonumber\\
&&~~~~~~=(\lfloor\frac{2r_0(\omega_1,
\omega_2)\sqrt{2I_0(\varepsilon, \omega_1, \omega_2,
K)+1}}{\varepsilon}\rfloor+1)^{2I_0(\varepsilon, \omega_1, \omega_2,
K)+1}.
\end{eqnarray*}
Then $\Gamma(\omega_1, \omega_2)\subset
\mathbb{R}^{2I_0(\varepsilon, \omega_1, \omega_2, K)+1}$ is a
$(2I_0(\varepsilon, \omega_1, \omega_2, K)+1)$-dimensional regular
polyhedron which is covered by $n_{\varepsilon, \omega_1,
\omega_2}(\Gamma(\omega_1, \omega_2))$ open balls of radius
$\frac{\varepsilon}{2}$ centered at $u^*_m=(u^*_{m, i})_{|i|\le
I_0(\varepsilon, \omega_1, \omega_2, K)}$, $u^*_{m, i}\in
\mathbb{R}, 1\le m\le n_{\varepsilon, \omega_1,
\omega_2}(\Gamma(\omega_1, \omega_2))$, in the norm of
$\mathbb{R}^{2I_0(\varepsilon, \omega_1, \omega_2, K)+1}$.

For each $1\le m\le n_{\varepsilon, \omega_1,
\omega_2}(\Gamma(\omega_1, \omega_2))$, we set $v_i=(v_{m,i})_{i\in
\mathbb{Z}}\in \ell^2$ such that
$$v_{m, i}=\left\{
       \begin{array}{lll}
u^*_{m, i}, &|i|\le I_0(\varepsilon, \omega_1,
\omega_2, K),\\
0, &|i|> I_0(\varepsilon, \omega_1, \omega_2, K).
       \end{array}
      \right. $$
Then for $v^{(n)}(\omega_1, \omega_2)=(v_i^{(n)}(\omega_1,
\omega_2))_{i\in \mathbb{Z}}\ (n\ge N_3(\varepsilon, \omega_1,
\omega_2, K))$ in the decomposition \eqref{16}, there exists $m_0\in
\{1, 2, \cdots, n_{\varepsilon, \omega_1, \omega_2}(\Gamma(\omega_1,
\omega_2))\}$ such that
\begin{eqnarray*}
\|v^{(n)}(\omega_1, \omega_2)-v_{m_0}\|^2=\sum_{|i|\le
I_0(\varepsilon, \omega_1, \omega_2, K)}|u_i^{(n)}(\omega_1,
\omega_2)-u_{m_0, i}|^2\le \frac{\varepsilon^2}{4},
\end{eqnarray*}
and hence, we get
\begin{eqnarray*}
\label{19} &&\|u^{(n)}(\omega_1,
\omega_2)-v_{m_0}\|^2=\|v^{(n)}(\omega_1,
\omega_2)-v_{m_0}+o^{(n)}(\omega_1, \omega_2)\|^2\nonumber\\
&&~~~~~~~~~\leq 2\|v^{(n)}(\omega_1,
\omega_2)-v_{m_0}\|^2+2\|o^{(n)}(\omega_1, \omega_2)\|^2\le
\varepsilon^2.
\end{eqnarray*}
Therefore, $\{u_i^{(n)}(\omega_1, \omega_2)=\Phi(t_n, \theta_{1,
-t_n}\omega_1, \theta_{2, -t_n}\omega_2)_{n\ge N_3(\varepsilon,
\omega_1, \omega_2, K)}\}\subset \ell^2$ can be covered by\\
$n_{\varepsilon, \omega_1, \omega_2}(\Gamma(\omega_1, \omega_2))$
open balls of radius $\varepsilon$ centered at $v_m=(v_{m, i})_{i\in
\mathbb{Z}}, 1\le m\le n_{\varepsilon, \omega_1,
\omega_2}(\Gamma(\omega_1, \omega_2))$.

(ii) By the invariant property of \textit{$\mathcal{D}(X)$-pullback
attractors}, we have $$\mathcal{A}(\omega_1, \omega_2)=\Phi(t,
\theta_{1, -t}\omega_1, \theta_{2,
-t}\omega_2)\mathcal{A}(\theta_{1, -t}\omega_1, \theta_{2,
-t}\omega_2)\subset K$$ for $t\ge T(\omega_1, \omega_2, K)$ and a.e.
$\omega_1\in \Omega_1, \omega_2\in \Omega_2$. For any
$\varepsilon>0$, we can see that $\mathcal{A}(\omega_1, \omega_2)$
can be covered under the norm of $\ell^2$, by $n_{\varepsilon,
\omega_1, \omega_2}(\Gamma(\omega_1, \omega_2))$-balls in $\ell^2$
with center $v_m=(v_{m, i})_{i\in \mathbb{Z}}, 1\le m\le
n_{\varepsilon, \omega_1, \omega_2}(\Gamma(\omega_1, \omega_2))$ and
radius $\varepsilon$. Thus, by the definition of \eqref{Kol}, the
proof is completed.
\end{proof}

\section{Pullback attractors for lattice differential equations in
$\ell^2$}\label{lattice equations} In this section, we discuss the
proper choice of parametric spaces $\Omega_1$ and $\Omega_2$ to
consider pullback attractors for lattice differential equations with
both non-autonomous deterministic and random forcing terms by using
the abstract theory presented in the previous section.

Suppose now $\Omega_1=\mathbb{R}$. Define a family $\{\theta_{1,
t}\}_{t\in \mathbb{R}}$ of shift operators by
\begin{equation}\label{20}
 \theta_{1, t}(\tau)=\tau+t, \ \ \forall t, \tau\in \mathbb{R}.
\end{equation}
Let $\Phi$: $ \mathbb{R}^+ \times \mathbb{R} \times \Omega_2 \times
\ell^2 \to \ell^2$ be a continuous cocycle on $\ell^2$ over
$(\mathbb{R}, \{\theta_{1, t}\}_{t\in \mathbb{R}})$ and $(\Omega_2,
\mathcal{F}_2, P, \{\theta_{2, t}\}_{t\in \mathbb{R}})$ where
$\{\theta_{1, t}\}_{t\in \mathbb{R}}$ is defined in \eqref{20}. Due
to Theorem \ref{existence }, we obtain the following result:

\begin{theorem}\label{existence 2}
Suppose that
\begin{itemize}
\item [$(\mathbf{a})$] there exists a closed measurable
 (w.r.t. the $P$-completion
of $\mathcal{F}_2$) $\mathcal{D}(\ell^2)$-pullback absorbing set $K$
in $\mathcal{D}(\ell^2)$ such that for a.e. $\tau\in \mathbb{R},
\omega\in \Omega_2$, any $B(\tau, \omega)\in \mathcal{D}(\ell^2)$,
there exists $T_B(\tau, \omega)>0$ yielding $\Phi(t, \tau-t,
\theta_{2, -t}\omega) B(\tau-t, \theta_{2, -t}\omega)\subset K$ for
all $t\geq T_B(\tau, \omega)$;

\item [$(\mathbf{b})$] $\Phi$: $ \mathbb{R}^+ \times \mathbb{R}
\times \Omega_2 \times \ell^2 \to \ell^2$ is asymptotically null on
$K$, i.e., for a.e. $\tau\in \mathbb{R}, \omega\in \Omega_2$, any
$B(\tau, \omega)\in \mathcal{D}(\ell^2)$, and any $\varepsilon>0$,
there exist $T(\varepsilon, \tau, \omega, K)>0$ and
$I_0(\varepsilon, \tau, \omega, K)\in \mathbb{N}$ such that
\begin{eqnarray}
\label{24} &&\sup_{u\in K}\sum_{|i|>I_0(\varepsilon, \tau, \omega,
K)}|(\Phi(t, \tau-t, \theta_{2, -t}\omega, u(\tau-t, \theta_{2,
-t}\omega)))_i|^2\leq \varepsilon^2,\nonumber\\
&& ~~~~~~~~\forall t\geq T(\varepsilon, \tau, \omega, B(\tau,
\omega)).
\end{eqnarray}
\end{itemize}
Then
\begin{itemize}
\item [(i)] $\Phi$ possesses a unique $\mathcal{D}(\ell^2)$-pullback
attractor is given  by, for each $\tau  \in \mathbb{R}$   and
$\omega \in \Omega_2$,
\begin{align}\label{25}
\mathcal{A} (\tau, \omega) &= \bigcap_{s \ge T_K(\tau, \omega)}
 \overline{ \bigcup_{t\ge s} \Phi(t, \tau-t, \theta_{2, -t}
 \omega, K(\tau-t, \theta_{2,-t}\omega  ))};
\end{align}
\item [(ii)] the Kolmogorov $\varepsilon$-entropy of $\mathcal{A} (\tau,
\omega)$ satisyies
\begin{eqnarray*}
\mathbb{K}_{\varepsilon}\leq (2I_0(\varepsilon, \tau, \omega,
K)+1)\ln(\lfloor\frac{2r_0(\tau, \omega)\sqrt{2I_0(\varepsilon,
\tau, \omega, K)+1}}{\varepsilon}\rfloor+1),
\end{eqnarray*}
where $r_0(\tau, \omega)=\sup_{u(\tau, \omega)\in K}\|u(\tau,
\omega)\|, \ \forall \tau\in \mathbb{R}, \omega\in \Omega_2$.
\end{itemize}
\end{theorem}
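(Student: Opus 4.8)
The plan is to obtain Theorem \ref{existence 2} as an immediate specialization of the abstract Theorem \ref{existence }, by choosing $\Omega_1 = \mathbb{R}$ equipped with the shift group $\{\theta_{1,t}\}_{t\in\mathbb{R}}$ defined in \eqref{20} and taking $\omega_1 = \tau$.

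First I would verify that $(\mathbb{R}, \{\theta_{1,t}\}_{t\in\mathbb{R}})$ is a parametric dynamical system in the sense of Section \ref{2}. This amounts to two checks: that $\theta_{1,0}$ is the identity on $\mathbb{R}$, which is clear from $\theta_{1,0}(\tau)=\tau$; and that $\theta_{1,s+t}=\theta_{1,t}\,\theta_{1,s}$ for all $s,t\in\mathbb{R}$, which follows from $\theta_{1,t}(\theta_{1,s}(\tau))=(\tau+s)+t=\tau+(s+t)=\theta_{1,s+t}(\tau)$. Since the measurability requirement on $K$ and on $\mathcal{A}$ concerns only the $P$-completion of $\mathcal{F}_2$ in $\Omega_2$, the choice $\Omega_1=\mathbb{R}$ imposes no additional measurability condition, and the structural hypotheses of Theorem \ref{existence } that involve the first parametric space are satisfied.

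Next I would perform the identification $\omega_1 = \tau$, under which $\theta_{1,-t}\omega_1 = \tau - t$, and observe that with this substitution the hypotheses $(\mathbf{a})$ and $(\mathbf{b})$ of the present theorem are precisely $(\mathbf{a})$ and $(\mathbf{b})$ of Theorem \ref{existence }: the absorbing inclusion $\Phi(t,\tau-t,\theta_{2,-t}\omega)B(\tau-t,\theta_{2,-t}\omega)\subset K$ reproduces the absorbing condition, while the tail estimate \eqref{24} reproduces the asymptotic-nullness condition on $K$. Similarly, the attractor representation \eqref{25} and the Kolmogorov $\varepsilon$-entropy bound in part (ii) are exactly the corresponding formulas of Theorem \ref{existence } rewritten with $\theta_{1,-t}\omega_1$ replaced by $\tau-t$ and $r_0(\omega_1,\omega_2)$ by $r_0(\tau,\omega)$.

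Conclusions (i) and (ii) then follow directly by invoking Theorem \ref{existence }. I do not expect any genuine obstacle in this argument, since it is purely a specialization; the only thing requiring care is the bookkeeping, namely carrying the identity $\theta_{1,-t}\omega_1 = \tau - t$ consistently through the absorbing condition, the tail estimate, and the attractor formula, so that the hypotheses of Theorem \ref{existence } are confirmed and its conclusion translated back into the $(\tau,\omega)$-notation.
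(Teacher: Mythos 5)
Your proposal is correct and takes essentially the same route as the paper: the authors state Theorem \ref{existence 2} without a separate proof, deriving it from Theorem \ref{existence } exactly by taking $\Omega_1=\mathbb{R}$ with the shift group \eqref{20} and identifying $\omega_1=\tau$, so that $\theta_{1,-t}\omega_1=\tau-t$ throughout. Your verification of the group property and the notational bookkeeping simply makes explicit the specialization the paper leaves implicit.
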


\section{Pullback attractors for SLDS in $\ell^2$}\label{SLDS}
In this section, we will apply Theorem \ref{existence 2} to prove
the existence of a pullback attractor for non-autonomous first order
stochastic lattice dynamical system.

\subsection{Mathematical Settings}

Denote $C_b(\mathbb{R}, \ell^2)$ be the space of all continuous
bounded functions from $\mathbb{R}$ into $\ell^2$. Consider the
following non-autonomous first order lattice differential equations
with time-dependent external forcing terms and multiplicative white
noise
\begin{equation}
\label{5.2} \frac{du_i(t)}{dt}=\nu_i(t)
(u_{i-1}-2u_i+u_{i+1})-\lambda_i(t) u_i-f_i(u_i,
t)+g_i(t)+u_i\circ\frac{dw(t)}{dt}, \ \ i\in \mathbb{Z},
\end{equation}
with initial data
\begin{equation}
\label{5.3} u_i(\tau)=u_{i, \tau}, \ \ i\in \mathbb{Z}, \tau\in
\mathbb{R},
\end{equation}
where $u_i\in \mathbb{R}$, $\mathbb{Z}$ denotes the integer set;
$\nu_i(t)$ and $\lambda_i(t)$ are locally integrable in $t$; $g_i\in
C(\mathbb{R}, \mathbb{R})$ and $f_i\in C(\mathbb{R}\times
\mathbb{R}, \mathbb{R})$ for $i\in \mathbb{Z}$; $w$ is an
independent Brownian motion. Note that system
\eqref{5.2}-\eqref{5.3} can be written as for $t\ge \tau\in
\mathbb{R}$,
\begin{equation}
\label{5.4} \frac{du}{dt}=-\nu(t) Au-\lambda(t) u-f(u,
t)+g(t)+u\circ\frac{dw(t)}{dt}, \ u(\tau)=u_{\tau}=(u_{i,
\tau})_{i\in \mathbb{Z}},
\end{equation}
where $u=(u_{i})_{i\in \mathbb{Z}}$, $f(u, t)=(f_i(u_i, t))_{i\in
\mathbb{Z}}, g(t)=(g_i(t))_{i\in \mathbb{Z}}$,
$Au=(-u_{i-1}+2u_i-u_{i+1})_{i\in \mathbb{Z}}$ and $w(t)$ is the
white noise with values in $\ell^2$ defined on the probability space
$(\Omega, \mathcal{F}, P)$ and
$$\Omega=\{\omega\in C(\mathbb{R}, \ell^2): \omega(0)=0\},$$ the
Borel sigma-algebra $\mathcal{F}$ is generated by the compact open
topology, and $P$ is the corresponding Wiener measure on
$\mathcal{F}$. Define a group $\{\theta_{2, t}\}_{t\in \mathbb{R}}$
acting on $(\Omega, \mathcal{F}, P)$ by
\begin{equation}\label{5.6}
\theta_{2, t}\omega(\cdot)=\omega(\cdot+t)-\omega(t), \ \ \omega\in
\Omega, t\in \mathbb{R}.
\end{equation}
Then $(\Omega, \mathcal{F}, P, \{\theta_{2, t}\}_{t\in \mathbb{R}})$
is a parametric dynamical system. We make the following assumptions:

\begin{itemize}
\item [$(\mathbf{A1})$] $\lambda_i(t), \nu_i(t)\in L_{loc}^1(\mathbb{R})$ in $t$
and there exist positive constants $\lambda^0, \lambda_0$ and
$\nu^0, \nu_0$ such that for $\forall i\in \mathbb{Z}$, $t\in
\mathbb{R}$,
$$0<\lambda_0\le \lambda_i(t)\le \lambda^0<+\infty, $$
$$0<\nu_0\le \nu_i(t)\le \nu^0<+\infty;$$
\item [$(\mathbf{A2})$] $f_i(x, t)$ is differentiable in $x$ and
continuous in $t$; $f_i(0, t)=0$; $xf_i(x, t)\ge -\alpha_i^2(t)$,
where $\alpha(t)=(\alpha_i(t))_{i\in \mathbb{Z}}\in C_b(\mathbb{R},
\ell^2)$, and there exists a constant $\beta\ge 0$ such that
$\partial_x f_i(x, t)\ge -\beta$, $\forall x, t\in \mathbb{R}, i\in
\mathbb{Z}$;
\item [$(\mathbf{A3})$] There exists a positive-valued continuous
function $\zeta(\iota, t)\in C(\mathbb{R}^+\times \mathbb{R},
\mathbb{R}^+)$ such that
\begin{equation*}
\sup_{i\in \mathbb{Z}}\max_{x\in [-\iota, \iota]}|\partial_x f_i(x,
t)|\le \zeta(\iota, t), \ \forall \iota\in \mathbb{R}^+, t\in
\mathbb{R};
\end{equation*}
\item [$(\mathbf{A4})$] $g(t)=(g_i(t))_{i\in \mathbb{Z}}\in
C_b(\mathbb{R}, \ell^2)$.
\end{itemize}

Now, let $\{\theta_{1, t}\}_{t\in \mathbb{R}}$ be the group acting
on $\mathbb{R}$ given by \eqref{20}. We next define a continuous
cocycle for system \eqref{5.4} over $(\mathbb{R}, \{\theta_{1,
t}\}_{t\in \mathbb{R}})$ and $(\Omega_2, \mathcal{F}_2, P,
\{\theta_{2, t}\}_{t\in \mathbb{R}})$. This can be done by first
transferring the stochastic system into a corresponding
non-autonomous deterministic one. Given $\omega\in \Omega$, denote
by
\begin{equation}
\label{5.7} z(\omega)=-\int^0_{-\infty}e^{r}\omega(r)dr.
\end{equation}
Then the random variable $z$ given in \eqref{5.7} is a stationary
solution of the one-dimensional Ornstein-Uhlenbeck equation
\begin{equation*}
dz+zdt=dw(t).
\end{equation*}
In other words, we get
\begin{equation}\label{5.8}
dz(\theta_{2, t}\omega)+z(\theta_{2, t}\omega)dt=dw(t).
\end{equation}
By \cite{BLL, CL}, we know that there exists a $\theta_{2,
t}$-variant set $\Omega'\subseteq \Omega$ of full $P$ measure such
that $z(\theta_{2, t}\omega)$ is continuous in $t$ for every
$\omega\in \Omega'$, and the random variable $|z(\omega)|$ is
tempered. In addition, for every $\omega\in \Omega'$, we have the
following limits:
\begin{equation}\label{temperness}
\lim_{t\rightarrow\pm\infty}\frac{|\omega(t)|}{|t|}=0, \quad
\lim_{t\rightarrow\pm\infty}\frac{|z(\theta_{2, t}\omega)|}{|t|}=0
\quad \mbox{and} \quad
\lim_{t\rightarrow\pm\infty}\frac{1}{t}\int_0^tz(\theta_{2,
s}\omega)ds=0.
\end{equation}
Hereafter, we will write $\Omega$ as $\Omega'$ and $\theta_{2, t}$
as $\theta_t$ instead.

\subsection{Existence and Uniqueness of a Mild Solution}

Let $u(t)$ be the solution of system \eqref{5.4}, then
$v(t)=u(t)e^{-z(\theta_t\omega)}$ satisfies
\begin{equation}
\label{5.9} \frac{dv}{dt}=-\nu(t) Av-\lambda(t)
v-e^{-z(\theta_t\omega)}f(e^{z(\theta_t\omega)}v,
t)+e^{-z(\theta_t\omega)}g(t)+z(\theta_t\omega)v,
\end{equation}
with initial condition $v_{\tau}=v(\tau,
\omega)=u_{\tau}e^{-z(\theta_{\tau}\omega)}, \ t>\tau, \tau\in
\mathbb{R}, \omega\in \Omega$. We recall $v: [\tau,
\tau+T)\rightarrow \ell^2\ (T>0)$ a mild solution of the following
random differential equation
$$\frac{dv(t)}{dt}=G(v, t, \theta_t\omega), \ \ v=(v_i)_{i\in
\mathbb{Z}}, G=(G_i)_{i\in \mathbb{Z}}, \ \ t\ge \tau\in
\mathbb{R},$$ where $\omega\in \Omega$, if $v\in C([\tau, \tau+T),
\ell^2)$ and
$$v_i(t, \tau)=v_i(\tau)+\int_{\tau}^tG_i(v(s), s, \theta_s\omega)ds
 \ \ \text{for}\ i\in \mathbb{Z}\ \text{and} \ t\in [\tau, \tau+T).$$

In this subsection, we will prove the existence and uniqueness of
the mild solution of system \eqref{5.9}.

\begin{proposition}\label{uniqueness}
Let $T>0$ and assumptions $(\mathbf{A1}$-$\mathbf{A4})$ hold. Then
for $\tau\in \mathbb{R}, \omega\in \Omega$ and any initial data
$v_{\tau}\in \ell^2$, system \eqref{5.9} has a unique $(\mathcal{F},
\mathcal{B}(\ell^2))$-measurable mild solution $v(\cdot, \tau;
\omega, v_{\tau}$, $g)\in C([\tau, \tau+T), \ell^2)$ with $v(\tau,
\tau; \omega, v_{\tau}, g)=v_{\tau}$, $v(t, \tau; \omega, v_{\tau},
g)\in \ell^2$ being continuous in $v_{\tau}\in \ell^2$ and $g\in
C_b(\mathbb{R}, \ell^2)$. Moreover, the solution $v(t, \tau; \omega,
v_{\tau}, g)$ exists globally on $[\tau, +\infty)$ for any $\tau\in
\mathbb{R}$. Moreover, for given $t\in \mathbb{R}^+, \tau\in
\mathbb{R}, \omega\in \Omega$ and $u_{\tau}\in \ell^2$, the mapping
$$\Phi(t, \tau, \omega, v_{\tau}, g)=v(t+\tau, \tau;
\theta_{-\tau}\omega, v_{\tau}, g) =u(t+\tau, \tau;
\theta_{-\tau}\omega, u_{\tau}, g)e^{-z(\theta_t\omega)},$$
generates a continuous cocycle from $\mathbb{R}^+\times
\mathbb{R}\times \Omega\times \ell^2$ to $\ell^2$ over $(\mathbb{R},
\{\theta_{1, t}\}_{t\in \mathbb{R}})$ and $(\Omega, \mathcal{F}, P,
\{\theta_{t}\}_{t\in \mathbb{R}})$, where
$v_{\tau}=u_{\tau}e^{-z(\theta_{\tau}\omega)}$.
\end{proposition}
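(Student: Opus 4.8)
The plan is to establish existence, uniqueness, global existence, and the cocycle property for the transformed system \eqref{5.9} in four stages. First, I would verify that the nonlinearity $G(v,t,\theta_t\omega)$ is locally Lipschitz in $v$ on $\ell^2$. The linear part $-\nu(t)Av-\lambda(t)v+z(\theta_t\omega)v$ is bounded on $\ell^2$ for each fixed $t$ by assumption $(\mathbf{A1})$, since $A$ is a bounded operator on $\ell^2$ (it is a tridiagonal difference operator with bounded entries). The genuinely nonlinear term is $e^{-z(\theta_t\omega)}f(e^{z(\theta_t\omega)}v,t)$; here I would use $(\mathbf{A3})$, which bounds $\sup_i\max_{|x|\le\iota}|\partial_x f_i(x,t)|$ by a continuous function $\zeta(\iota,t)$, to deduce that $f(\cdot,t)$ maps bounded sets of $\ell^2$ into bounded sets and is Lipschitz on balls. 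Combined with the continuity of $z(\theta_t\omega)$ in $t$ on $\Omega'$ from \eqref{temperness}, the vector field is continuous in $t$ and locally Lipschitz in $v$, so the classical Picard--Lindel\"of theorem for ODEs in the Banach space $\ell^2$ yields a unique local mild solution $v\in C([\tau,\tau+T_{\max}),\ell^2)$ with the integral representation, and measurability in $\omega$ follows from continuous dependence on parameters.

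Second, to upgrade the local solution to a global one, I would derive an a priori bound on $\|v(t)\|^2$. Taking the inner product of \eqref{5.9} with $v$ and using $(\mathbf{A2})$ (namely $xf_i(x,t)\ge-\alpha_i^2(t)$ with $\alpha\in C_b(\mathbb{R},\ell^2)$, the dissipativity $\lambda_i(t)\ge\lambda_0>0$, and the sign of the discrete Laplacian term $\langle Av,v\rangle\ge0$), I expect an inequality of the form
\begin{equation*}
\frac{d}{dt}\|v\|^2\le \big(2z(\theta_t\omega)-2\lambda_0+c\big)\|v\|^2+C(t,\omega),
\end{equation*}
where the forcing $C(t,\omega)$ is controlled via $(\mathbf{A4})$ and the boundedness of $\alpha$. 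A Gronwall argument then bounds $\|v(t)\|$ on any finite interval $[\tau,\tau+T]$, since $z(\theta_s\omega)$ is continuous hence integrable there, precluding finite-time blow-up and giving global existence on $[\tau,+\infty)$. Continuous dependence of $v(t,\tau;\omega,v_\tau,g)$ on the initial datum $v_\tau\in\ell^2$ and on $g\in C_b(\mathbb{R},\ell^2)$ follows from a Gronwall estimate applied to the difference of two solutions with differing data.

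Third, having solved \eqref{5.9}, I would define $\Phi$ through the stated formula $\Phi(t,\tau,\omega,v_\tau,g)=v(t+\tau,\tau;\theta_{-\tau}\omega,v_\tau,g)$ and verify the four axioms of Definition \ref{Def1}. The identity axiom (ii) and continuity in the initial datum (iv) are immediate from the preceding steps. The cocycle property (iii) is the crucial algebraic identity: I would exploit the shift-equivariance of the Ornstein--Uhlenbeck process, namely $z(\theta_{t}(\theta_s\omega))=z(\theta_{t+s}\omega)$, together with uniqueness of solutions, to match $\Phi(t+\tau,\omega,\cdot)$ with $\Phi(t,\theta_{1,\tau}\omega,\theta_{2,\tau}\omega,\cdot)\circ\Phi(\tau,\omega,\cdot)$; the time-translation in the deterministic parameter $\tau\mapsto\tau+t$ via \eqref{20} and the Wiener shift \eqref{5.6} must align consistently, which is precisely why the argument of the cocycle is written as $\theta_{-\tau}\omega$.

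The main obstacle I anticipate is the cocycle identity (iii) rather than the analytic estimates, because it requires bookkeeping the interaction between the two parametric shifts (the deterministic shift on $\mathbb{R}$ and the stochastic Wiener shift) and the conjugating factor $e^{-z(\theta_t\omega)}$ relating $u$ and $v$. One must check carefully that the change of variables $u=ve^{z(\theta_t\omega)}$ is consistent under time translation so that the relation $\Phi(t,\tau,\omega,v_\tau,g)=u(t+\tau,\tau;\theta_{-\tau}\omega,u_\tau,g)e^{-z(\theta_t\omega)}$ holds with the correct exponential factor at each composition; measurability (axiom (i)) then follows from the joint measurability of mild solutions in $(t,\omega,v_\tau)$, which is standard once continuity in all arguments is established.
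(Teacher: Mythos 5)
Your overall architecture (local Lipschitz bounds via $(\mathbf{A3})$, an energy estimate plus Gronwall for global existence, then verification of Definition \ref{Def1}) matches the paper's, but your first step fails as written. You assert that the vector field is \emph{continuous in $t$} and invoke the classical Picard--Lindel\"of theorem. Under $(\mathbf{A1})$ the coefficients $\nu_i(t),\lambda_i(t)$ are only in $L^1_{loc}(\mathbb{R})$, so $F(v,t,\omega)$ is in general discontinuous in $t$ and the classical theorem does not apply. What is needed, and what the paper uses, is a Carath\'eodory-type existence result: $F$ continuous in $v$, measurable and locally integrable in $t$, together with a locally integrable dominating function $\kappa_B(t,\omega)$ controlling both $\|F\|$ and the Lipschitz constant on each bounded set $B\subset\ell^2$; the paper then cites Proposition 2.1.1 of \cite{Chueshov} to obtain the unique local mild solution of the integral equation \eqref{5.10}. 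Your $(\mathbf{A3})$-based estimates are precisely the ingredients for constructing $\kappa_B$, so the repair is to swap in the Carath\'eodory theorem, but as stated the step is a genuine gap.

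Your global-existence argument also diverges from the paper's and omits a needed justification. The paper does not differentiate $\|v\|^2$ along the mild solution: since the coefficients are merely locally integrable, it approximates $\lambda_i,\nu_i$ by continuous functions $\lambda_i^{(m)},\nu_i^{(m)}$ with the same uniform bounds (\eqref{cov1}--\eqref{cov2}, citing \cite{Pazy}), performs the energy estimate \eqref{5.11}--\eqref{5.12} and Gronwall on the solutions $v^{(m)}$ of the approximate systems \eqref{int1}, obtains the $m$-independent bound \eqref{5.13} together with equicontinuity, and passes to the limit via Arzel\`a--Ascoli and dominated convergence (\eqref{cov3}--\eqref{cov4}) to identify the limit with the mild solution, whence $T_{\max}=T$. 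Your direct route can be salvaged: the mild solution is absolutely continuous with values in the Hilbert space $\ell^2$, so $t\mapsto\|v(t)\|^2$ is absolutely continuous and your differential inequality holds a.e., which suffices for Gronwall --- but you must say this, since ``taking the inner product of \eqref{5.9} with $v$'' presupposes a pointwise derivative the mild solution has only almost everywhere. Note too that the paper reuses the approximants $v^{(m)}$ in Lemmas \ref{abs set} and \ref{Asymptotic Null}, so its detour pays later dividends that your shortcut forgoes. Two smaller remarks: the dissipative term is $-\nu(t)Av$ with site-dependent $\nu_i(t)$, so your constant-coefficient identity $\langle Av,v\rangle\ge 0$ does not by itself give a sign for $\langle \nu(t)Av,v\rangle$ (a point the paper's own passage to \eqref{5.12} glosses in the same way); and your careful plan for the cocycle identity (iii), via uniqueness of solutions and shift-equivariance of $z(\theta_t\omega)$, is in fact more explicit than the paper's treatment, which essentially asserts measurability (citing \cite{Arnold}) and the cocycle property without detailed bookkeeping.
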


\begin{proof}
We first show that if $v_{\tau}\in \ell^2$, system \eqref{5.9} has a
unique measurable mild solution $v(t, \tau; \omega, v_{\tau}, g)\in
\ell^2$ on $[\tau, \tau+T)$ with $v(\tau, \tau; \omega, v_{\tau},
g)=v_{\tau}$ for $T>0$ and $\omega\in \Omega$.

Given $\omega\in \Omega, v_{\tau}\in \ell^2$ and $g\in
C_b(\mathbb{R}, \ell^2)$, let
$$F(v, t, \omega)=-\nu(t) Av-\lambda(t) v-e^{-z(\omega)}f(ve^{z(\omega)},
t)+e^{-z(\omega)}g(t)+vz(\omega).$$ Note that $F(v, t, \omega)$ is
continuous in $v$ and locally integrable in $t$ and measurable in
$\omega$ from $\ell^2\times \mathbb{R}\times\Omega$ into $\ell^2$.
Denote $|\| \cdot|\|=\sup_{t\in \mathbb{R}}\|\cdot(t)\|$, then by
$(\mathbf{A1}$-$\mathbf{A4})$,
\begin{eqnarray*}
\begin{split} \|F(v, t, \omega)\|&\le
(\lambda^0+4\nu^0+\max\{\zeta(\|v\||e^{z(\omega)}|, t),
\beta\}+|z(\omega)|) \|v\|\\
&\quad+|e^{-z(\omega)}||\| g|\|.
\end{split}
\end{eqnarray*} Hence for any
$v^{(1)}=(v_i^{(1)})_{i\in \mathbb{Z}}, v^{(2)}=(v_i^{(2)})_{i\in
\mathbb{Z}}\in \ell^2$,
\begin{eqnarray*}
&&\|F(v^{(1)}, t, \omega)-F(v^{(2)}, t, \omega)\|\\
&&~~\le
(\lambda^0+4\nu^0+\max\{\zeta((\|v^{(1)}\|+\|v^{(2)}\|)|e^{z(\omega)}|,
t), \beta\}+|z(\omega)|)\|v^{(1)}-v^{(2)}\|.
\end{eqnarray*}
For any bounded set $B\subset \ell^2$ with $\sup_{u\in B}\|v\|\le
\iota$, and define
$$\kappa_B(t, \omega)=(\lambda^0+4\nu^0+\max\{\zeta(\iota|e^{z(\omega)}|,
t), \beta\}+|z(\omega)|)\iota+|e^{-z(\omega)}||\| g|\|\ge 0,$$ then
for any $v, v^{(1)}, v^{(2)}\in B$, $$F(v, t, \omega)\leq
\kappa_B(t, \omega), \ \ \|F(v^{(1)}, t, \omega)-F(v^{(2)}, t,
\omega)\|\le \kappa_B(t, \omega)\|v^{(1)}-v^{(2)}\|$$ and
$$\int_{\tau}^{\tau+1}\kappa_B(s, \theta_s\omega)ds<\infty, \
\forall \tau\in \mathbb{R}.$$ By \cite[Proposition 2.1.1]{Chueshov},
problem \eqref{5.9} possesses a unique local mild solution $v(\cdot,
\tau, \omega; v_{\tau}, g)\in C([\tau, \tau+T_{\max}), \ell^2)$
satisfying the integral equation
\begin{eqnarray}
\begin{split}
\label{5.10} v(t)&=v_{\tau}+\int_{\tau}^t(-\nu(s) Av-\lambda(s)
v-e^{-z(\theta_s\omega)}f(ve^{z(\theta_s\omega)}, s)\\
&\quad\quad +e^{-z(\theta_s\omega)}g(s) +v z(\theta_s\omega))ds, \
t\in [\tau, \tau+T_{\max}) \ (0<T_{\max}\le T),
\end{split}
\end{eqnarray}
where $[\tau, \tau+T_{\max})$ is the maximal interval of existence
of the solution of \eqref{5.9}.

We next show that $T_{\max}=T$. Since $\lambda_i(t), \nu_i(t)\in
L_{loc}^1(\mathbb{R})$ in $t$, by \cite{Pazy}, there exist sequences
of continuous functions in $t\in \mathbb{R}$, $\lambda_i^{(m)}(t),
\nu_i^{(m)}(t), m\in \mathbb{N}$, such that
\begin{equation}\label{cov1}
\lim_{m\rightarrow\infty}\int_{\tau}^t|\lambda_i^{(m)}(s)-\lambda_i(s)|ds=0
\ \mbox{and} \ \lambda_0\le \lambda_i^{(m)}(t)\le \lambda^0, \forall
\tau, t\in \mathbb{R},
\end{equation}
\begin{equation}\label{cov2}
\lim_{m\rightarrow\infty}\int_{\tau}^t|\nu_i^{(m)}(s)-\nu_i(s)|ds=0
\ \mbox{and} \ \nu_0\le \nu_i^{(m)}(t)\le \nu^0, \forall \tau, t\in
\mathbb{R}.
\end{equation}

Consider the following differential equations with initial data
$v_{\tau}\in \ell^2$,
\begin{equation}\label{int1}
\frac{dv^{(m)}}{dt}=F^{(m)}(v^{(m)}, t, \omega),
\end{equation}
where $F^{(m)}(v^{(m)}, t, \omega)=(F^{(m)}_i(v^{(m)}, t,
\omega))_{i\in \mathbb{Z}}$ and
\begin{equation}\label{int2}
\begin{split}
F^{(m)}_i(v^{(m)}, t, \omega)&=-\nu_i^{(m)}(t)
Av_i^{(m)}-\lambda_i^{(m)}(t)
v_i^{(m)}\\
&\quad-e^{-z(\omega)}f_i(v_i^{(m)}e^{z(\omega)},
t)+e^{-z(\omega)}g_i(t)+v_i^{(m)}z(\omega).
\end{split}
\end{equation}
For $\omega\in \Omega$, by the continuity of $F^{(m)}_i(v^{(m)}, t,
\omega)$ in $t$, \eqref{int1} has a unique solution $v(\cdot, \tau;
\omega, v_{\tau}, g)\in C([\tau, \tau+T_{\max}^{(m)}), \ell^2)\cap
C^1((\tau, \tau+T_{\max}^{(m)}), \ell^2)$ such that
\begin{equation}\label{int3}
\begin{split}
\frac{dv_i^{(m)}}{dt}=F^{(m)}_i(v^{(m)}, t, \omega)
\end{split}
\end{equation}
and
\begin{equation}\label{int4}
\begin{split}
v_i^{(m)}=v_{\tau}+\int_{\tau}^tF^{(m)}_i(v^{(m)}(s), s, \omega)ds.
\end{split}
\end{equation}
Taking the inner product in $\ell^2$ in \eqref{int3} yields
\begin{eqnarray}\label{5.11}
&&\frac{d\|v^{(m)}\|^2}{dt}= 2(-\nu^{(m)}(t)
Av^{(m)}-\lambda^{(m)}(t) v^{(m)}+z(\theta_t\omega)v^{(m)}, v^{(m)})\nonumber\\
&&\quad-2(e^{-z(\theta_t\omega)}f^{(m)}(v^{(m)}e^{z(\theta_t\omega)},
t), v^{(m)})+2(e^{-z(\theta_t\omega)}g^{(m)}(t), v^{(m)}).
\end{eqnarray}
Note that
\begin{equation*}
\begin{split}
-\beta e^{2z(\theta_t\omega)}\|v^{(m)}\|^2&\le
(f(v^{(m)}e^{z(\theta_t\omega)}, t),
v^{(m)}e^{z(\theta_t\omega)})\\
&\quad\quad\le \iota(e^{z(\theta_t\omega)}\|v^{(m)}\|,
s)e^{2z(\theta_t\omega)}\|v^{(m)}\|^2.
\end{split}
\end{equation*}
It follows from \eqref{5.11} that
\begin{eqnarray}\label{5.12}
\frac{d\|v^{(m)}\|^2}{dt}\le
(-\lambda_0+2\beta+2z(\theta_s\omega))\|v^{(m)}\|^2+(2|\|
\alpha|\|^2+\frac{|\| g|\|^2}{\lambda_0})e^{-2z(\theta_s\omega)}.
\end{eqnarray}
Applying Gronwall's inequality to \eqref{5.12}, we obtain that
\begin{eqnarray*}
\begin{split}
&\|v^{(m)}(t)\|^2\le \|v_{\tau}\|^2e^{(2\beta-\lambda_0)(t-\tau)
+2\int_{\tau}^tz(\theta_r\omega)dr}\\
&\quad+(2|\| \alpha|\|^2+\frac{|\|
g|\|^2}{\lambda_0})e^{(2\beta-\lambda_0)t
+2\int_{0}^tz(\theta_r\omega)dr}\int_{\tau}^te^{(\lambda_0-2\beta)s
-2z(\theta_s\omega) -2\int_{0}^sz(\theta_r\omega)dr}ds\\
&\quad\quad:=\eta^2(t, \tau, \omega), \quad t\in [\tau,
\tau+T_{\max}^{(m)}),
\end{split}
\end{eqnarray*}
where $\eta^2(t, \tau, \omega)\in C([\tau, \tau+T), \mathbb{R}^+)$
is independent of $m$, which implies that
\begin{eqnarray}\label{5.13}
\begin{split}
|v_i^{(m)}(t)|\le \eta(t, \tau, \omega), \ \mbox{for all} \ m\in
\mathbb{N}, t\in [\tau, \tau+T), \omega\in \Omega.
\end{split}
\end{eqnarray}
It then follows that for some $ \tilde{\eta}(T, \tau, \omega)>0$,
which is independent on $m$ such that $|F_i^{(m)}(v^{(m)}(t), t)|\le
\tilde{\eta}(T, \tau, \omega)$ and
\begin{eqnarray*}
\begin{split}
|v_i^{(m)}(t)-v_i^{(m)}(s)|&=\int_s^t|F_i^{(m)}(v^{(m)}(r), r)|dr\\
&\le \tilde{\eta}(T, \tau, \omega)|t-s|, \forall t, s\in [\tau,
\tau+T), m\in \mathbb{N}, \omega\in \Omega.
\end{split}
\end{eqnarray*}
By the Arzela-Ascoli Theorem, there exists a convergent subsequence
$\{v_i^{(m_k)}(t)$, $t\in [\tau, \tau+T)\}$ of $\{v_i^{(m)}(t), t\in
[\tau, \tau+T)\}$ such that
\begin{eqnarray*}
\begin{split}
v_i^{(m_k)}(t)\rightarrow \bar{v}_i(t) \ \mbox{as} \
k\rightarrow\infty \ \mbox{for} \ t\in [\tau, \tau+T), i\in
\mathbb{Z}
\end{split}
\end{eqnarray*}
and $\bar{v}_i(t)$ is continuous in $ t\in [\tau, \tau+T)$.
Moreover, $|\bar{v}_i(t)|\le  \eta(t, \tau, \omega)$ for $t\in
[\tau, \tau+T), \omega\in \Omega$.

By \eqref{cov1}, \eqref{cov2}, \eqref{5.13} and the Lebesgue
Dominated Convergence Theorem, we have
\begin{equation}\label{cov3}
\lim_{k\rightarrow\infty}\int_{\tau}^t|\lambda_i^{(m_k)}(s)v_i^{(m_k)}(s)
-\lambda_i(s)\bar{v}_i(s)|ds=0,
\end{equation}
\begin{equation}\label{cov4}
\lim_{k\rightarrow\infty}\int_{\tau}^t|\nu_i^{(m_k)}(s)v_i^{(m_k)}(s)
-\nu_i(s)\bar{v}_i(s)|ds=0.
\end{equation}
Thus by replacing $m$ by $m_k$ in \eqref{int4} and letting
$k\rightarrow\infty$, we obtain
\begin{equation*}
\begin{split}
\bar{v}_i(t)=v_{\tau}+\int_{\tau}^tF_i(\bar{v}(s), s, \omega)ds \
\mbox{for all} \ t\in [\tau, \tau+T), \omega\in \Omega,
\end{split}
\end{equation*}
which implies that $\bar{u}(t)=(\bar{u}_i(t))_{i\in \mathbb{Z}}$ is
a mild solution of \eqref{5.9}. Then by the uniqueness of the mild
solutions of \eqref{5.9}, $T_{\max}=T$. Moreover, this means that
$v(t, \tau; \omega, v_{\tau}, g)$ exists globally on $[\tau,
+\infty)$ for any $\tau\in \mathbb{R}$. Here we remain to show for
given $t\in \mathbb{R}^+, \tau\in \mathbb{R}, \omega\in \Omega$ and
$u_{\tau}\in \ell^2$, the mapping
$$\Phi(t, \tau, \omega, v_{\tau}, g)=v(t+\tau, \tau;
\theta_{-\tau}\omega, v_{\tau}, g) =u(t+\tau, \tau;
\theta_{-\tau}\omega, u_{\tau}, g)e^{-z(\theta_t\omega)},$$
generates a continuous cocycle from $\mathbb{R}^+\times
\mathbb{R}\times \Omega\times \ell^2$ to $\ell^2$ over $(\mathbb{R},
\{\theta_{1, t}\}_{t\in \mathbb{R}})$ and $(\Omega, \mathcal{F}, P,
\{\theta_{t}\}_{t\in \mathbb{R}})$ in the sense of Definition
\ref{Def1}. In fact, the function $F(v, t, \omega)$ is continuous in
$v, g$ and measurable in $t, \omega$, which implies that $v:
(\mathbb{R}^+)\times\mathbb{R}\times\Omega\times\ell^2\rightarrow\ell^2$,
$(t, \cdot; \omega, v_{\tau}, g)\mapsto v(t, \cdot; \omega,
v_{\tau}, g)$ is
$(\mathcal{B}(\mathbb{R}^+)\times\mathcal{F}\times\mathcal{B}(\ell^2),
\mathcal{B}(\ell^2))$-measurable (see \cite{Arnold}). The proof is
complete.

\end{proof}

\subsection{Existence of a Pullback Absorbing Set}
In this subsection, we will get the existence of a
$\mathcal{D}(\ell^2)$-pullback absorbing set for the continuous
cocycle $\Phi$.

\begin{lemma}\label{abs set} Let
$\tilde{\lambda}=\lambda_0-\beta-\frac{2}{\sqrt{\pi}}>0$. Assume
that $(\mathbf{A1}$-$\mathbf{A4})$ hold, then there exists a closed
measurable $\mathcal{D}(\ell^2)$-pullback absorbing set
$\mathcal{K}=\{\mathcal{K}(\tau, \omega): \tau\in \mathbb{R},
\omega\in \Omega\}$ for $\Phi$ in $\mathcal{D}(\ell^2)$ such that
for any $B(\tau, \omega)\in \mathcal{D}(\ell^2)$, there exists
$T_B=T_B(\tau, \omega)>0$ yielding $\Phi(t, \tau-t,
\theta_{-t}\omega)B(\tau-t, \theta_{-t}\omega)\subseteq
\mathcal{K}(\tau, \omega)$ for all $t\ge T_B$ and $v_{\tau-t}\in
B(\tau-t, \theta_{-t}\omega)$.
\end{lemma}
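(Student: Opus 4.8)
The plan is to manufacture a random radius $R(\tau,\omega)$ from an energy estimate on the transformed equation \eqref{5.9} and to take $\mathcal{K}(\tau,\omega)$ to be the closed $\ell^2$-ball of that radius. First I would test \eqref{5.9} against $v$ in $\ell^2$. Since $(Av,v)=\sum_{i\in\mathbb{Z}}|v_{i+1}-v_i|^2\ge0$ and $\nu_i(t)\ge\nu_0>0$ by $(\mathbf{A1})$, the diffusion term is non-positive and may be discarded; $-(\lambda(t)v,v)\le-\lambda_0\|v\|^2$; the dissipativity in $(\mathbf{A2})$ controls the nonlinear term through $\beta$; and the forcing $(e^{-z(\theta_t\omega)}g(t),v)$ is handled by Young's inequality, part of it being absorbed into the $\lambda_0\|v\|^2$ dissipation. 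This is exactly the computation already carried out in the proof of Proposition \ref{uniqueness}, and it yields the estimate \eqref{5.12}, i.e. a differential inequality of the form
\begin{equation*}
\frac{d}{dt}\|v\|^2\le\bigl(-2(\lambda_0-\beta)+2z(\theta_t\omega)\bigr)\|v\|^2+h(\theta_t\omega),\qquad h(\theta_t\omega)=\Bigl(2|\|\alpha|\|^2+\tfrac{|\|g|\|^2}{\lambda_0}\Bigr)e^{-2z(\theta_t\omega)},
\end{equation*}
with $\alpha,g$ bounded by $(\mathbf{A2})$ and $(\mathbf{A4})$.

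Next I would apply Gronwall's inequality on the pullback interval. Since $\Phi(t,\tau-t,\theta_{-t}\omega)v_{\tau-t}=v(\tau;\tau-t,\theta_{-\tau}\omega,v_{\tau-t})$, integrating from $\tau-t$ to $\tau$ and shifting the time variable gives
\begin{equation*}
\|\Phi(t,\tau-t,\theta_{-t}\omega)v_{\tau-t}\|^2\le e^{-2(\lambda_0-\beta)t+2\int_{-t}^0z(\theta_r\omega)dr}\|v_{\tau-t}\|^2+\int_{-t}^0e^{2(\lambda_0-\beta)s+2\int_s^0z(\theta_r\omega)dr}h(\theta_s\omega)\,ds.
\end{equation*}
The analytic heart of the argument is the control of the noise exponent. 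Bounding $2\int_s^0z(\theta_r\omega)dr\le2\int_s^0|z(\theta_r\omega)|dr$ and invoking the ergodic theorem together with the temperedness relations \eqref{temperness} (so that $\frac1{|s|}\int_s^0|z(\theta_r\omega)|dr\to\mathbb{E}|z|=\tfrac1{\sqrt\pi}$), the net asymptotic exponential rate of the integrand is $-2\tilde\lambda=-2(\lambda_0-\beta-\tfrac2{\sqrt\pi})<0$. Hence, since $v_{\tau-t}$ is the $e^{-z}$-rescaling of an element of a tempered family $B\in\mathcal{D}(\ell^2)$ and therefore grows sub-exponentially, the first term tends to $0$ as $t\to\infty$, while the improper integral converges to a finite quantity
\begin{equation*}
r^2(\tau,\omega)=\int_{-\infty}^0e^{2(\lambda_0-\beta)s+2\int_s^0z(\theta_r\omega)dr}h(\theta_s\omega)\,ds.
\end{equation*}

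Then I would set $R^2(\tau,\omega)=1+r^2(\tau,\omega)$ and define $\mathcal{K}(\tau,\omega)=\{v\in\ell^2:\|v\|\le R(\tau,\omega)\}$, a closed nonempty ball; the estimate above then produces, for each $B\in\mathcal{D}(\ell^2)$, a time $T_B(\tau,\omega)>0$ with $\Phi(t,\tau-t,\theta_{-t}\omega)B(\tau-t,\theta_{-t}\omega)\subseteq\mathcal{K}(\tau,\omega)$ for all $t\ge T_B$, which is the required absorption. It remains to verify two structural facts: measurability of $\mathcal{K}$ with respect to the $P$-completion of $\mathcal{F}$, which follows because $R(\tau,\omega)$ is assembled from the measurable stationary Ornstein--Uhlenbeck process through the convergent integral $r^2$, so that $\omega\mapsto d(x,\mathcal{K}(\tau,\omega))$ is measurable; and membership $\mathcal{K}\in\mathcal{D}(\ell^2)$, which amounts to the tempering condition \eqref{s1}, namely $e^{-\gamma s}R^2(\theta_{1,-s}\tau,\theta_{-s}\omega)\to0$ for every $\gamma>0$. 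The latter again reduces to the sub-exponential growth in the pullback direction of the integral defining $r^2$, guaranteed by \eqref{temperness}.

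The step I expect to be the main obstacle is precisely the control of the random exponent $\int_s^0z(\theta_r\omega)dr$: one must simultaneously ensure that the integral defining $r^2(\tau,\omega)$ converges for every (not merely almost every) $\omega$ and that the resulting radius is tempered, and it is exactly here that the standing hypothesis $\tilde\lambda=\lambda_0-\beta-\tfrac2{\sqrt\pi}>0$ enters, the constant $\tfrac2{\sqrt\pi}=2\,\mathbb{E}|z|$ being the asymptotic rate contributed by the multiplicative white noise via the ergodic theorem.
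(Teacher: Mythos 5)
Your proposal is correct and follows essentially the same route as the paper: the energy inequality \eqref{5.12}, Gronwall over the pullback interval $(\tau-t,\tau)$ with the substitution $\theta_{-\tau}\omega$, the ergodic limit $\frac{1}{t}\int_0^t\abs{z(\theta_s\omega)}\,ds\to\frac{1}{\sqrt{\pi}}$ from \eqref{temperness} making the net rate $-\tilde\lambda<0$ (so the tail of the tempered family $B$ vanishes and the improper integral defining the radius converges and is itself tempered), and finally the closed ball $\mathcal{K}(\tau,\omega)=\overline{B_{\ell^2}(0,R(\omega))}$ as in \eqref{50}--\eqref{51}. The only difference is cosmetic: the paper derives the estimate for the approximating solutions $v^{(m)}$ (with continuous coefficients, where differentiating $\lVert v\rVert^2$ is legitimate) and then lets $v^{(m_k)}\to v$, a step you subsume by citing \eqref{5.12} directly, and your constants differ from the paper's by inessential factors of $2$ in the exponents.
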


\begin{proof}
Let $\Phi^{(m)}$ be a solution of system \eqref{int1}, then
$\Phi^{(m)}\in \ell^2$ for all $t\ge \tau$. From Proposition
\ref{uniqueness}, we know that $v^{(m)}(\tau, \tau-t,
\theta_{-\tau}\omega)=\Phi^{(m)}(t, \tau-t, \theta_{-t}\omega)$.
Denote $\hat{\lambda}=\lambda_0-\beta$ and apply Gronwall's
inequality over $(\tau-t, \tau)$ to \eqref{5.12}, it follows that
\begin{eqnarray*}
\begin{split}
&\|v^{(m)}(\tau, \tau-t, \omega,
v_{\tau-t})\|^2+\beta\int_{\tau-t}^{\tau}e^{-\hat{\lambda}(\tau-s)
+2\int_s^{\tau}z(\theta_r\omega)} \|v^{(m)}(s, \tau-t, \omega,
v_{\tau-t})\|^2ds\\
&\quad\le e^{-\hat{\lambda}t
-2\int_{\tau}^{\tau-t}z(\theta_r\omega)dr}\|v_{\tau-t}\|^2\\
&\quad\quad+(2|\| \alpha|\|^2+\frac{|\|
g|\|^2}{\lambda_0})e^{-\hat{\lambda}\tau
+2\int_{0}^{\tau}z(\theta_r\omega)dr}\int_{\tau-t}^{\tau}e^{\hat{\lambda}s
-2z(\theta_s\omega) -2\int_{0}^sz(\theta_r\omega)dr}ds.
\end{split}
\end{eqnarray*}
Since $v^{(m_k)}\rightarrow v$ for some $m_k\rightarrow\infty$,
where $v$ is the mild solution of \eqref{5.9}, then the estimation
above still holds with $v^{(m_k)}$ being replaced by $v$. Now, by
replacing $\omega$ with $\theta_{-\tau}\omega$ in the expression
$v$, we obtain
\begin{equation}\label{est2}
\begin{split}
&\|v(\tau, \tau-t, \theta_{-\tau}\omega, v_{\tau-t}\|^2\\
&\quad\quad+\beta\int_{\tau-t}^{\tau}e^{-\hat{\lambda}(\tau-s)
+2\int_s^{\tau}z(\theta_{r-\tau}\omega)dr}
\|v(s, \tau-t, \theta_{-\tau}\omega, v_{\tau-t})\|^2ds\\
&=\|v(\tau, \tau-t, \theta_{-\tau}\omega, v(\tau-t,
\theta_{-\tau}\omega)\|^2\\
&\quad\quad+\beta\int_{-t}^{0}e^{-\hat{\lambda}s
+2\int_{-t}^{0}z(\theta_{r}\omega)dr}
\|v(s+\tau, \tau-t, \theta_{-\tau}\omega, v_{\tau-t})\|^2ds\\
&\le e^{-\hat{\lambda}t
-2\int_{\tau}^{\tau-t}z(\theta_{r-\tau}\omega)dr}
\|v_{\tau-t}\|^2\\
&\quad\quad+(2|\| \alpha|\|^2+\frac{|\|
g|\|^2}{\lambda_0})\int_{\tau-t}^{\tau}e^{\hat{\lambda}(s-\tau)
-2z(\theta_{s-\tau}\omega)+2\int_{s}^{\tau}z(\theta_{r-\tau}\omega)dr}ds\\
& \le e^{-\hat{\lambda}t-2\int_{-t}^{0}z(\theta_{r}\omega)dr}
\|v_{\tau-t}\|^2\\
&\quad\quad+(2|\| \alpha|\|^2+\frac{|\|
g|\|^2}{\lambda_0})\int_{-t}^{0}e^{\hat{\lambda}s
-2z(\theta_{s}\omega)+2\int_{s}^{0}z(\theta_{r}\omega)dr}ds\\
&\le e^{-\hat{\lambda}t-2\int_{-t}^{0}z(\theta_{r}\omega)dr}
\|v_{\tau-t}\|^2\\
&\quad\quad+(2|\| \alpha|\|^2+\frac{|\|
g|\|^2}{\lambda_0})\int_{-\infty}^{0}e^{\hat{\lambda}s
-2z(\theta_{s}\omega)+2\int_{s}^{0}z(\theta_{r}\omega)dr}ds.
\end{split}
\end{equation}
Due to \eqref{temperness}, we know that
$$\int_{-\infty}^0e^{\hat{\lambda}s
-2z(\theta_{s}\omega)+2\int_{s}^0z(\theta_{r}\omega)dr}ds<+\infty,$$
and
$$\lim_{t\rightarrow\pm\infty}\frac{1}{t}\int_0^t|z(\theta_{s}\omega)|ds
=\frac{1}{\sqrt{\pi}}.$$ Let
$\tilde{\lambda}=\lambda_0-\beta-\frac{2}{\sqrt{\pi}}$ and consider
for any $v_{\tau-t}\in B(\tau-t, \theta_{-t}\omega)$, we have for
$\tilde{\lambda}>0$, $\tau\in \mathbb{R}$ from \eqref{s1} that
\begin{eqnarray}\label{tempered}
&&e^{-\hat{\lambda}t-2\int_{-t}^0z(\theta_{r}\omega)dr}
\|v_{\tau-t}\|^2\nonumber\\
&&\quad\le e^{-\hat{\lambda}t-2\int_{-t}^0z(\theta_{s}\omega)ds}
\|B(\tau-t, \theta_{-t}\omega)\|^2\rightarrow 0 \ \mbox{as} \
t\rightarrow+\infty.
\end{eqnarray}
By \eqref{est2} and \eqref{tempered}, it follows that
\begin{eqnarray*}
&&\|v(\tau, \tau-t, \theta_{-\tau}\omega, v_{\tau-t})\|^2\\
&&\quad\quad\quad\quad \leq 1+(2|\| \alpha|\|^2+\frac{|\|
g|\|^2}{\lambda_0})\int_{-\infty}^0e^{\hat{\lambda}s
-2z(\theta_{s}\omega)+2\int_{s}^0z(\theta_{r}\omega)dr}ds.
\end{eqnarray*}
Now denoting
\begin{eqnarray}\label{50}
R^2(\omega)=1+(2|\| \alpha|\|^2+\frac{|\|
g|\|^2}{\lambda_0})\int_{-\infty}^0e^{\hat{\lambda}s
-2z(\theta_{s}\omega)+2\int_{s}^0z(\theta_{r}\omega)dr}ds,
\end{eqnarray}
we conclude that
\begin{eqnarray}\label{51}
\mathcal{K}(\tau, \omega)=\overline{B_{\ell^2}(0, R(\omega))}
\end{eqnarray}
is a closed measurable $\mathcal{D}(\ell^2)$-pullback absorbing set.
In fact, for all $\gamma>0$,
\begin{eqnarray*}
\begin{split}
e^{-\gamma t}R^2(\theta_{-t}\omega)&=e^{-\gamma t}+(2|\|
\alpha|\|^2+\frac{|\| g|\|^2}{\lambda_0})e^{-\gamma
t}\int_{-\infty}^0e^{\hat{\lambda}s
-2z(\theta_{s-t}\omega)+2\int_{s}^0z(\theta_{r-t}\omega)dr}ds\\
&=e^{-\gamma t}+(2|\| \alpha|\|^2+\frac{|\|
g|\|^2}{\lambda_0})e^{-\gamma
t}\int_{-\infty}^{-t}e^{\hat{\lambda}(s+t)
-2z(\theta_{s}\omega)+2\int_{s}^0z(\theta_{r}\omega)dr}ds\\
&\quad\quad\rightarrow 0 \ \ \mbox{as} \ \ t\rightarrow+\infty.
\end{split}
\end{eqnarray*}

\end{proof}

\subsection{Asymptotically Null of the Solutions}
In this subsection, the property of asymptotically null for the
solution $\Phi$ of system \eqref{5.9} will be established.

\begin{lemma} \label{Asymptotic Null}
Let $\mathcal{K}(\tau, \omega)$ be the absorbing set given by
\eqref{51}. Then for every $\epsilon>0$, there exist
$\tilde{T}(\epsilon, \tau, \omega, \mathcal{K}(\tau, \omega))>0$ and
$\tilde{N}(\epsilon, \tau, \omega, \mathcal{K}(\tau, \omega))\ge 1$,
such that the solution $\Phi(t, \tau-t, \theta_{-t}\omega)=v(\tau,
\tau-t, \theta_{-\tau}\omega)$ of problem \eqref{5.9} is
asymptotically null, that is, for all $t\geq \tilde{T}(\epsilon,
\tau, \omega, \mathcal{K}(\tau, \omega))$, $v_{\tau-t}\in B(\tau-t,
\theta_{-t}\omega)$,
\begin{eqnarray*}\sum_{|i|>\tilde{N}
(\epsilon, \tau, \omega, \mathcal{K}(\tau, \omega))}| (v(\tau,
\tau-t, \theta_{-\tau}\omega, v_{\tau-t}, g)_i|^2\leq
 \epsilon^2.
\end{eqnarray*}
\end{lemma}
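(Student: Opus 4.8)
The plan is to prove the asymptotic tail estimate by a standard "cut-off function" (truncation) energy estimate, which is the workhorse for establishing asymptotic compactness of lattice systems via Definition~\ref{Asym null}. I would introduce a smooth cut-off $\rho\in C^1(\mathbb{R}^+,[0,1])$ with $\rho(s)=0$ for $0\le s\le 1$, $\rho(s)=1$ for $s\ge 2$, and $|\rho'(s)|\le M$ for some constant $M$. For an integer $k\ge 1$ set the weights $\rho_i=\rho(|i|/k)$ and test equation~\eqref{5.9} against the vector $(\rho_i v_i)_{i\in\mathbb{Z}}$ in $\ell^2$; this concentrates the estimate on the modes with $|i|\gtrsim k$, which is exactly the tail $\sum_{|i|>\tilde N}|v_i|^2$ we must control.

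The key steps, in order, are as follows. First I would compute $\tfrac{1}{2}\tfrac{d}{dt}\sum_i\rho_i v_i^2$ term by term. The diffusion term $-\nu(t)(Av,\rho v)$ is handled by summation by parts: the discrete Laplacian contributes a nonnegative principal part plus an error term of the form $\sum_i(\rho_{i+1}-\rho_i)v_{i+1}v_i$, which is $O(M/k)\|v\|^2$ by the mean-value bound on $\rho'$ together with $|\nu(t)|\le\nu^0$ from $(\mathbf{A1})$; this error is made small by taking $k$ large. The linear damping $-\lambda(t)\sum_i\rho_i v_i^2$ gives a negative term bounded above using $\lambda_i(t)\ge\lambda_0$, and the Ornstein--Uhlenbeck term contributes $2z(\theta_t\omega)\sum_i\rho_i v_i^2$. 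For the nonlinearity I would use $(\mathbf{A2})$: the sign condition $xf_i(x,t)\ge-\alpha_i^2(t)$ (after the $e^{z}$-scaling as in the derivation of \eqref{5.12}) yields $-e^{-z}\sum_i\rho_i f_i(e^z v_i,t)v_i\le \sum_i\rho_i\alpha_i^2(t)e^{-2z}$, and I would exploit that $\alpha\in C_b(\mathbb{R},\ell^2)$ so that the tail $\sum_{|i|>k}\alpha_i^2(t)$ is uniformly small. The forcing term $e^{-z}(g,\rho v)$ is split by Young's inequality into $\tfrac{\lambda_0}{2}\sum_i\rho_i v_i^2$ plus a term controlled by the tail $\sum_{|i|>k}g_i^2(t)$, again uniformly small because $g\in C_b(\mathbb{R},\ell^2)$ by $(\mathbf{A4})$. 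Collecting terms I would arrive at a differential inequality of the shape
\begin{equation*}
\frac{d}{dt}\sum_i\rho_i v_i^2\le(-\hat\lambda+2z(\theta_t\omega))\sum_i\rho_i v_i^2+\frac{C M}{k}\|v\|^2+\delta_k(t),
\end{equation*}
where $\delta_k(t)$ collects the tail sums of $\alpha$ and $g$ and tends to $0$ uniformly in $t$ as $k\to\infty$.

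Next I would apply Gronwall's inequality over $(\tau-t,\tau)$ exactly as in the passage leading to \eqref{est2}, then replace $\omega$ by $\theta_{-\tau}\omega$ to pull back. The initial contribution is bounded by $e^{-\hat\lambda t-2\int_{-t}^0 z(\theta_r\omega)dr}\|v_{\tau-t}\|^2$, which vanishes as $t\to+\infty$ by the temperedness property \eqref{tempered} and the membership $v_{\tau-t}\in B(\tau-t,\theta_{-t}\omega)\in\mathcal{D}(\ell^2)$ satisfying \eqref{s1}. The remaining integral contributions are controlled using the two limits in \eqref{temperness}, in particular the integrability of $e^{\hat\lambda s-2z(\theta_s\omega)+2\int_s^0 z(\theta_r\omega)dr}$ that already appears in \eqref{50}, with the extra small factors coming from the tails of $\alpha$ and $g$ and from the $O(1/k)$ commutator error. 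A uniform-in-$t$ bound $\|v(s,\tau-t,\theta_{-\tau}\omega,v_{\tau-t})\|^2\le\eta^2$ on the absorbing ball (available from Lemma~\ref{abs set}) is used to dominate the $\frac{CM}{k}\|v\|^2$ term.

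The main obstacle is the two-sided choice of parameters: I must pick $k=\tilde N$ large enough that the tail quantities $\sum_{|i|>k}\alpha_i^2$, $\sum_{|i|>k}g_i^2$, and the commutator constant $CM/k$ are all below a prescribed fraction of $\epsilon^2$, and simultaneously pick $t$ large enough (i.e. $t\ge\tilde T$) that the pullback initial term and the transient part of the Gronwall integral are likewise small. Because $z(\theta_t\omega)$ is random and only tempered rather than bounded, the delicate point is ensuring all the exponential integrals remain finite and that the smallness is uniform over $u\in\mathcal{K}$; this is where the condition $\tilde\lambda=\lambda_0-\beta-\tfrac{2}{\sqrt\pi}>0$ and the ergodic-type limit $\tfrac1t\int_0^t|z(\theta_s\omega)|ds\to\tfrac{1}{\sqrt\pi}$ are essential to dominate the growth and close the estimate.
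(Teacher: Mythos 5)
Your proposal follows the paper's proof essentially step for step: the same smooth cut-off function $\rho$ with $|\rho'|\le c_0$, the same term-by-term estimates (summation by parts for the discrete Laplacian producing an $O(1/N)\|v\|^2$ error, the sign condition of $(\mathbf{A2})$ for the nonlinearity, Young's inequality for the forcing, with tails of $\alpha$ and $g$ small by $(\mathbf{A2})$ and $(\mathbf{A4})$), then Gronwall over $(\tau-t,\tau)$, the pullback substitution $\omega\mapsto\theta_{-\tau}\omega$, the absorbing-set bound from Lemma~\ref{abs set} to dominate the $O(1/N)$ term, temperedness \eqref{temperness} to kill the initial term and control the exponential integrals, and the simultaneous choice of $\tilde N$ and $\tilde T$. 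The one technical divergence is that, because $\lambda_i(t),\nu_i(t)$ are only locally integrable in $t$, the paper performs the energy estimate on the approximating solutions $v^{(m)}$ with continuous coefficients (from Proposition~\ref{uniqueness}) and then passes to the limits $m_k\to\infty$ and $n\to\infty$, whereas you differentiate the mild solution directly --- a step that, to be fully rigorous in this setting, needs either that same approximation or an a.e.-differentiability (absolute continuity) argument.
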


\begin{proof}
Choose a smooth cut-off function satisfying $0\leq \rho(s)\leq 1$
for $s\in \mathbb{R^{+}}$ and $\rho(s)=0$ for $0\leq s\leq 1$,
$\rho(s)=1$ for $s\geq 2$. Suppose there exists a constant $c_0$
such that $|\rho'(s)|\leq c_0$ for $s\in \mathbb{R}^+$. For any
$n\ge 1$, let $v^{(m)}_n=v^{(m)}(\tau, \tau-t, \omega, v_{\tau-t,
n}, g_n)=(v^{(m)}_{n, i})_{i\in \mathbb{Z}}$ be a mild solution of
\eqref{int1}. Let $N$ be a fixed integer which will be specified
later, and set $x^{(m)}_{n}=(x^{(m)}_{n, i})_{i\in \mathbb{Z}}$
where $x^{(m)}_{n, i}=\rho(\frac{|i|}{N})v^{(m)}_{n, i}$ for any
$i\in \mathbb{Z}$. Then taking the inner product of \eqref{int1}
with $x$ in $\ell^2$, we obtain
\begin{equation}\label{4.4}
\begin{split}
&\frac{d}{dt}\sum_{i\in \mathbb{Z}}\rho(\frac{|i|}{N})|
v^{(m)}_{n, i}|^2\\
&\quad=-2\nu^{(m)}(t)(A_mv^{(m)}_n, x^{(m)}_{n})
-2(\lambda^{(m)}(t)-z(\theta_{t}\omega))\sum_{i\in \mathbb{Z}}
\rho(\frac{|i|}{N})|v^{(m)}_{n, i}|^2\\
&\quad\quad-2e^{-z(\theta_{t}\omega)}\sum_{i\in \mathbb{Z}}
\rho(\frac{|i|}{N})f(e^{z(\theta_{t}\omega)}v^{(m)}_{n, i}, t)
v^{(m)}_{n, i}\\
&\quad\quad+2e^{-z(\theta_{t}\omega)}(g_n(t), x^{(m)}_{n}).
\end{split}
\end{equation}
We now estimate terms in \eqref{4.4} one by one. First, we have
\begin{eqnarray}
(A_mv^{(m)}_n, x^{(m)}_{n})=(\tilde{B}_mv^{(m)},
\tilde{B}_mx^{(m)}_{n})\geq -\frac{2c_0}{N}\|v^{(m)}\|^2.
\label{4.5}
\end{eqnarray}
For the second term in \eqref{4.4}, it follows from the assumption
$(\mathbf{A}_2)$ that
\begin{eqnarray*}
-\infty<-2e^{-z(\theta_{t}\omega)} \sum_{i\in
\mathbb{Z}}\rho(\frac{|i|}{N})f_i(e^{z
(\theta_{t}\omega)}v^{(m)}_{n, i}, t) v^{(m)}_{n, i}\leq
2e^{-2z(\theta_{t}\omega)}\sum_{|i|\ge N}\alpha^2_i(t).
\end{eqnarray*}
For the last term in \eqref{4.4},
\begin{eqnarray}
2e^{-z(\theta_{t}\omega)}(g_n(t), x^{(m)}_{n})\leq
\lambda^{(m)}(t)\sum_{i\in \mathbb{Z}}\rho(\frac{|i|}{N})|
v_i|^2+\frac{1}{\lambda_0}e^{-2z(\theta_{t}\omega)}\sum_{|i|\geq
N}g^2_i(t). \label{4.6}
\end{eqnarray}
Combining \eqref{4.4}-\eqref{4.6}, it yields
\begin{equation}\label{4.7}
\begin{split}
&\frac{d}{dt}\sum_{i\in \mathbb{Z}}\rho(\frac{|i|}{N})| v^{(m)}_{n,
i}|^2+(\lambda_0-2z(\theta_{t}\omega))\sum_{i\in \mathbb{Z}}
\rho(\frac{|i|}{N})|v^{(m)}_{n, i}|^2\\
&\quad\le\frac{4\nu^0c_0}{N}\|v^{(m)}\|^2
+(2+\frac{1}{\lambda_0})e^{-2z(\theta_{t}\omega)}\sum_{|i|\ge
N}(\alpha^2_i(t)+g^2_i(t)).
\end{split}
\end{equation}
Apply Gronwall's inequality to \eqref{4.7} over $(\tau-t, \tau)$, we
obtain that
\begin{equation}\label{4.8}
\begin{split}
&\sum_{i\in \mathbb{Z}}\rho(\frac{|i|}{N})| v^{(m)}_{n, i}(\tau,
\tau-t, \omega,
v^{(m)}_{\tau-t, n}, g_n)|^2\\
&\le e^{-\lambda_0t-2\int_{\tau}^{\tau-t}
z(\theta_{r}\omega)dr}\|v^{(m)}_{\tau-t, n}\|^2\\
&\quad+\frac{4\nu^0 c_0}{N}\int_{\tau-t}^{\tau}
e^{-\lambda_0(\tau-s)+2\int_{s}^{\tau}z(\theta_{r}\omega)dr}
\|v^{(m)}(s, \tau-t, \omega,
v^{(m)}_{\tau-t, n}, g_n)\|^2ds\\
&\quad+(2+\frac{1}{\lambda_0})\sum_{|i|\geq
N}(\alpha^2_i(t)+g^2_i(t)) \int_{\tau-t}^{\tau}
e^{-\lambda_0(\tau-s)+2\int_{s}^{\tau}
z(\theta_{r}\omega)dr-2z(\theta_{s}\omega)}ds.
\end{split}
\end{equation}
Now, for $\tau\in \mathbb{R}$, substitute $\theta_{-\tau}\omega$ for
$\omega$ and estimate each term in \eqref{4.8}
\begin{equation}\label{4.9}
\begin{split}
&\sum_{i\in \mathbb{Z}}\rho(\frac{|i|}{N})| v^{(m)}_{n, i}(\tau,
\tau-t, \theta_{-\tau}\omega,
v^{(m)}_{\tau-t, n}, g_n)|^2\\
&\le e^{-\lambda_0t-2\int_{\tau}^{\tau-t}
z(\theta_{r-\tau}\omega)dr}\|v^{(m)}_{\tau-t, n}\|^2\\
&\quad+\frac{4\nu^0 c_0}{N}\int_{\tau-t}^{\tau}
e^{-\lambda_0(\tau-s)+2\int_{s}^{\tau}z(\theta_{r-\tau}\omega)dr}
\|v^{(m)}(s, \tau-t, \theta_{-\tau}\omega,
v^{(m)}_{\tau-t, n}, g_n)\|^2ds\\
&\quad+(2+\frac{1}{\lambda_0})\sum_{|i|\geq
N}(\alpha^2_i(t)+g^2_i(t)) \int_{\tau-t}^{\tau}
e^{-\lambda_0(\tau-s)+2\int_{s}^{\tau}
z(\theta_{r-\tau}\omega)dr-2z(\theta_{s-\tau}\omega)}ds\\
&\le e^{-\lambda_0t-2\int_{-t}^{0}
z(\theta_{r}\omega)dr}\|v^{(m)}_{\tau-t, n}\|^2\\
&\quad+\frac{4\nu^0 c_0}{N}\int_{-t}^{0}
e^{-\lambda_0s+2\int_{-t}^{0}z(\theta_{r}\omega)dr}
\|v^{(m)}(s+\tau, \tau-t, \theta_{-\tau}\omega,
v^{(m)}_{\tau-t, n}, g_n)\|^2ds\\
&\quad+(2+\frac{1}{\lambda_0})\sum_{|i|\geq
N}(\alpha^2_i(t)+g^2_i(t)) \int_{-t}^{0} e^{\lambda_0s+2\int_{s}^{0}
z(\theta_{r}\omega)dr-2z(\theta_{s}\omega)}ds.
\end{split}
\end{equation}
By Lemma \ref{abs set}, there exists $T_1(\epsilon, \tau, \omega,
\mathcal{K}(\omega))>0$ such that for all $t\ge T_1(\epsilon, \tau,
\omega, \mathcal{K}(\omega))$,
\begin{equation}\label{4.10}
\begin{split}
&\frac{4\nu^0 c_0}{N}\int_{-t}^{0}
e^{-\lambda_0s+2\int_{-t}^{0}z(\theta_{r}\omega)dr}
\|v^{(m)}(s+\tau, \tau-t, \theta_{-\tau}\omega,
v^{(m)}_{\tau-t, n}, g_n)\|^2ds\\
&\quad\quad\le \frac{4\nu^0 c_0}{\beta N}R^2(\omega),
\end{split}
\end{equation}
where $R^2(\omega)$ is given by \eqref{50}. Since $g(t),
\alpha(t)\in C_b(\mathbb{R}, \ell^2)$, by using \eqref{temperness}
again, we know
\begin{equation*}
\begin{split}
(2+\frac{1}{\lambda_0})\sum_{|i|\geq N}(\alpha^2_i(t)+g^2_i(t))
\int_{-\infty}^{0} e^{\lambda_0s+2\int_{s}^{0}
z(\theta_{r}\omega)dr-2z(\theta_{s}\omega)}ds<\infty,
\end{split}
\end{equation*}
and hence
\begin{equation}\label{4.11}
\begin{split}
\lim_{N\rightarrow\infty}(2+\frac{1}{\lambda_0})\sum_{|i|\geq
N}(\alpha^2_i(t)+g^2_i(t)) \int_{-\infty}^{0}
e^{\lambda_0s+2\int_{s}^{0}
z(\theta_{r}\omega)dr-2z(\theta_{s}\omega)}ds=0.
\end{split}
\end{equation}
Now, by means of \eqref{tempered} and \eqref{4.9}-\eqref{4.11},
there exist $\tilde{T}(\epsilon, \tau, \omega,
\mathcal{K}(\omega))\ge T_1(\epsilon, \tau, \omega,
\mathcal{K}(\omega))$ and $\tilde{N}(\epsilon, \tau, \omega,
\mathcal{K}(\omega))\ge 1$ such that
\begin{eqnarray}\label{4.15}
&&\sum_{|i|\geq \tilde{N}(\epsilon, \tau, \omega,
\mathcal{K}(\omega))}| v^{(m)}_{n, i}(\tau, \tau-t,
\theta_{-\tau}\omega,
v^{(m)}_{\tau-t, n}, g_n)|^2\nonumber\\
&\leq & \sum_{i\in \mathbb{Z}}\rho(\frac{|i|}{N})| v^{(m)}_{n,
i}(\tau, \tau-t, \theta_{-\tau}\omega, v^{(m)}_{\tau-t, n}, g_n)|^2
\leq \epsilon^2.
\end{eqnarray}

Since there is $m_k$ such that $$v^{(m_k)}_{n, i}(\tau, \tau-t,
\theta_{-\tau}\omega, v^{(m)}_{\tau-t, n}, g_n)\rightarrow (v(\tau,
\tau-t, \theta_{-\tau}\omega, v_{\tau-t, n}, g_n))_{i}$$ as
$m_k\rightarrow\infty$, by \eqref{4.15}
\begin{eqnarray*}
\sum_{|i|\geq \tilde{N}(\epsilon, \tau, \omega,
\mathcal{K}(\omega))}| (v(\tau, \tau-t, \theta_{-\tau}\omega,
v_{\tau-t, n}, g_n))_i|^2 \leq \epsilon^2
\end{eqnarray*}
for any $n\ge 1$. Now, letting $n\rightarrow\infty$ we can obtain
the conclusion.
\end{proof}

\subsection{Existence of Pullback Attractors}

We are now in a position to give our main result in this section.

\begin{theorem}
Suppose that $(\mathbf{A1}$-$\mathbf{A4})$ hold. The lattice
dynamical system $\Phi$ with both non-autonomous deterministic and
random forcing terms generated by system \eqref{5.9} has a unique
pullback attractor.
\end{theorem}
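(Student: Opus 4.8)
The plan is to verify that the continuous cocycle $\Phi$ generated by system \eqref{5.9} fulfils both hypotheses $(\mathbf{a})$ and $(\mathbf{b})$ of Theorem \ref{existence 2}, and then to read off the conclusion directly from that theorem with $\Omega_1=\mathbb{R}$ and $\Omega_2=\Omega$. No new estimate is required: the entire argument consists in matching the three preceding results to the abstract framework of Section \ref{cocycle}.

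First I would invoke Proposition \ref{uniqueness}, which guarantees that $\Phi(t,\tau,\omega,v_\tau,g)=v(t+\tau,\tau;\theta_{-\tau}\omega,v_\tau,g)$ is a well-defined continuous cocycle on $\ell^2$ over $(\mathbb{R},\{\theta_{1,t}\}_{t\in\mathbb{R}})$ and $(\Omega,\mathcal{F},P,\{\theta_t\}_{t\in\mathbb{R}})$ in the sense of Definition \ref{Def1}. This supplies the measurability, identity, cocycle and continuity properties presupposed in the statement of Theorem \ref{existence 2}, so the abstract machinery applies to the present system.

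Next I would check $(\mathbf{a})$. Lemma \ref{abs set} produces the closed family $\mathcal{K}(\tau,\omega)=\overline{B_{\ell^2}(0,R(\omega))}$ of \eqref{51}, confirms $\mathcal{K}\in\mathcal{D}(\ell^2)$ through the temperedness computation $e^{-\gamma t}R^2(\theta_{-t}\omega)\to 0$ carried out at the end of that proof, and establishes the absorption $\Phi(t,\tau-t,\theta_{-t}\omega)B(\tau-t,\theta_{-t}\omega)\subseteq\mathcal{K}(\tau,\omega)$ for all $t\ge T_B(\tau,\omega)$; together with the asserted measurability with respect to the $P$-completion of $\mathcal{F}$, taking $K=\mathcal{K}$ this is precisely hypothesis $(\mathbf{a})$. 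For $(\mathbf{b})$ I would appeal to Lemma \ref{Asymptotic Null}: given $\varepsilon>0$ it furnishes $\tilde T$ and $\tilde N$ with $\sum_{|i|>\tilde N}|(v(\tau,\tau-t,\theta_{-\tau}\omega,v_{\tau-t},g))_i|^2\le\varepsilon^2$ for all $t\ge\tilde T$ and all $v_{\tau-t}\in\mathcal{K}(\tau-t,\theta_{-t}\omega)$. Because the tail bounds in \eqref{4.9}--\eqref{4.11} are controlled solely through the absorbing radius $R(\omega)$ of \eqref{50} and not through the individual initial datum, the estimate is uniform over $\mathcal{K}$; choosing $I_0=\tilde N$ then yields the supremum inequality \eqref{24}, which is exactly $(\mathbf{b})$.

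With $(\mathbf{a})$ and $(\mathbf{b})$ in hand, Theorem \ref{existence 2}(i) delivers a unique $\mathcal{D}(\ell^2)$-pullback attractor $\mathcal{A}=\{\mathcal{A}(\tau,\omega):\tau\in\mathbb{R},\ \omega\in\Omega\}$ for $\Phi$, represented by the omega-limit formula \eqref{25}, completing the proof. I expect the only delicate point to be the uniformity just noted: one must read the asymptotic-null conclusion of Lemma \ref{Asymptotic Null} as holding uniformly over the entire absorbing set $\mathcal{K}$ rather than pointwise along a single trajectory, so that it genuinely matches the $\sup_{u\in K}$ form demanded by $(\mathbf{b})$. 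This is legitimate since $\mathcal{K}$ is itself a member of $\mathcal{D}(\ell^2)$ and the asymptotic-compactness argument of Section \ref{cocycle} only needs the null property on the absorbing family.
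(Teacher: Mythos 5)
Your proposal is correct and follows exactly the paper's own route: the paper's proof is precisely the assembly of Proposition \ref{uniqueness} (cocycle), Lemma \ref{abs set} (hypothesis $(\mathbf{a})$), and Lemma \ref{Asymptotic Null} (hypothesis $(\mathbf{b})$) into Theorem \ref{existence 2}, which you have spelled out in more detail than the paper's one-line citation. Your added remark on the uniformity of the asymptotic-null estimate over $\mathcal{K}$ is a legitimate and careful reading of Lemma \ref{Asymptotic Null}, not a deviation.
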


\begin{proof}
The result follows directly from Lemmas \ref{abs set},
\ref{Asymptotic Null} and Theorem \ref{existence 2}.
\end{proof}

\section*{Acknowledgments}
The first author thanks Prof. Bixiang Wang for emailing him the file
of reference \cite{Wang2}.



\end{document}